\newtheorem{theorem}{Theorem}
\theoremstyle{plain}
\newtheorem{corollary}{Corollary}
\newtheorem{definition}{Definition}
\newtheorem{remark}{Remark}
\numberwithin{equation}{section}
\begin{document}
\title[Thinness and Fine Topology with Relative Capacity]{Some Properties of
Thinness and Fine Topology with Relative Capacity}
\author{Cihan UNAL}
\address{Sinop University\\
Faculty of Arts and Sciences\\
Department of Mathematics}
\email{cihanunal88@gmail.com}
\urladdr{}
\thanks{}
\author{Ismail AYDIN}
\address{Sinop University\\
Faculty of Arts and Sciences\\
Department of Mathematics}
\email{iaydin@sinop.edu.tr}
\urladdr{}
\thanks{}
\subjclass[2000]{Primary 31C40, 46E35; Secondary 32U20, 43A15}
\keywords{Fine topology, Thinness, Relative capacity, Weighted variable
exponent Sobolev spaces}
\dedicatory{}
\thanks{}

\begin{abstract}
In this paper, we introduce a thinness in sense to a type of relative
capacity for weighted variable exponent Sobolev space. Moreover, we reveal
some properties of this thinness and consider the relationship with finely
open and finely closed sets. We discuss fine topology and compare this
topology with Euclidean one. Finally, we give some information about
importance of the fine topology in the potential theory.
\end{abstract}

\maketitle

\section{Introduction}

The history of potential theory begins in 17th century. Its development can
be traced to such greats as Newton, Euler, Laplace, Lagrange, Fourier,
Green, Gauss, Poisson, Dirichlet, Riemann, Weierstrass, Poincar\'{e}. We
refer to the book by Kellogg \cite{Kel} for references to some of the old
works.

The Sobolev spaces $W^{k,p}\left( \Omega \right) $ are usually defined for
open sets $\Omega .$ This makes sometimes difficulties to classical method
for nonopen sets. The authors in \cite{Kil2} and \cite{Mal} present
different approach is to investigate Sobolev spaces on finely open sets.
This is just a part of fine potential theory in $%
\mathbb{R}
^{d}$.

Kov\'{a}\v{c}ik and R\'{a}kosn\'{\i}k \cite{K} introduced the variable
exponent Lebesgue space $L^{p\left( .\right) }(%
\mathbb{R}
^{d})$ and the Sobolev space $W^{k,p(.)}\left( 
\mathbb{R}
^{d}\right) $. They present some basic properties of the variable exponent
Lebesgue space $L^{p\left( .\right) }(%
\mathbb{R}
^{d})$ and the Sobolev space $W^{k,p(.)}\left( 
\mathbb{R}
^{d}\right) $ such as reflexivity and H\"{o}lder inequalities were obtained.
For a historical journey, we refer \cite{Dien2}, \cite{Fan}, \cite{K}, \cite%
{Mus} and \cite{Sam}.

The variational capacity has been used extensively in nonlinear potential
theory on $%
\mathbb{R}
^{d}$. Let \ $\Omega \subset 
\mathbb{R}
^{d}$ is open and $K\subset \Omega $ is compact. Then the relative
variational $p$-capacity is defined by%
\begin{equation*}
cap_{p}\left( K,\Omega \right) =\inf_{f}\dint\limits_{\Omega }\left\vert
\bigtriangledown f\left( x\right) \right\vert ^{p}dx\text{,}
\end{equation*}%
where the infimum is taken over smooth and zero boundary valued functions $f$
in $\Omega $ such that $f\geq 1$ in $K.$ The set of admissible functions $f$
can be replaced by the continuous first order Sobolev functions with $f\geq
1 $ in $K.$ The $p$-capacity is a Choquet capacity relative to $\Omega .$
For more details and historical background, see \cite{Hei}. Also, Harjulehto
et al. \cite{Har1} defined a relative capacity with variable exponent. They
studied properties of the capacity and compare it with the Sobolev capacity.
In \cite{Unal}, the authors expanded this relative capacity to weighted
variable exponent. Moreover, they investigate properties of this capacity
and give some relationship between defined capacity in \cite{Har1} and
Sobolev capacity. Besides to these studies, the Riesz capacity which is an
another representative for capacity theory has been considered by \cite%
{Unal3}.

In \cite{Ac} and \cite{Cos}, the authors have explored some properties of
the $p\left( .\right) $-Dirichlet energy integral%
\begin{equation*}
\dint\limits_{\Omega }\left\vert \nabla f\left( x\right) \right\vert
^{p\left( x\right) }dx
\end{equation*}%
over a bounded domain $\Omega \subset 
\mathbb{R}
^{d}.$ They have discussed the existence and regularity of energy integral
minimizers. As an alternative method the minimizers in one dimensional case
have been studied by the authors in \cite{Har3}. Moreover, Harjulehto et al. 
\cite{Har4} considered the Dirichlet energy integral, with boundary values
given in the Sobolev sense, has a minimizer provided the variable exponent
satisfies a certain jump condition.

The fine topology was introduced by Cartan \cite{Car} in 1946. Classical
fine topology has found many applications such as its connections to the
theory of analytic functions and probability. For classical treatment we can
refer \cite{Bjorn2008}, \cite{Cons}, \cite{Doob}, \cite{Fug} and \cite{Helm}%
. Also, Meyers \cite{Mey} first generalized the fine topology to nonlinear
theories. For the historical background and an excellent scientific survey
we refer \cite{Hei} and references therein.

In this study, we present $\left( p\left( .\right) ,\vartheta \right) $-thin
sets in sense to $\left( p\left( .\right) ,\vartheta \right) $-relative
capacity and consider the basic and advanced properties. We discuss some
results about $\left( p\left( .\right) ,\vartheta \right) $-relative
capacity in $\left( p\left( .\right) ,\vartheta \right) $-thin sets.
Moreover, we generalize several properties of fine topology and find new
results by Wiener type integral.

\section{Notation and Preliminaries}

In this paper, we will work on $%
\mathbb{R}
^{d}$ with Lebesgue measure $dx$. The measure $\mu $ is doubling if there is
a fixed constant $c_{d}\geq 1,$ called the doubling constant of $\mu $ such
that%
\begin{equation*}
\mu \left( B\left( x_{0},2r\right) \right) \leq c_{d}\mu \left( B\left(
x_{0},r\right) \right)
\end{equation*}%
for every ball $B\left( x_{0},r\right) $ in $%
\mathbb{R}
^{d}.$ Also, the elements of the space $C_{0}^{\infty }\left( 
\mathbb{R}
^{d}\right) $ are the infinitely differentiable functions with compact
support. We denote the family of all measurable functions $p\left( .\right) :%
\mathbb{R}
^{d}\rightarrow \lbrack 1,\infty )$ (called the variable exponent on $%
\mathbb{R}
^{d}$) by the symbol $\mathcal{P}\left( 
\mathbb{R}
^{d}\right) $. In this paper, the function $p(.)$ always denotes a variable
exponent. For $p\left( .\right) \in \mathcal{P}\left( 
\mathbb{R}
^{d}\right) ,$ put 
\begin{equation*}
p^{-}=\underset{x\in 
\mathbb{R}
^{d}}{\text{ess inf}}p(x)\text{, \ \ \ \ \ \ }p^{+}=\underset{x\in 
\mathbb{R}
^{d}}{\text{ess sup}}p(x)\text{.}
\end{equation*}

A measurable and locally integrable function $\vartheta :%
\mathbb{R}
^{d}\rightarrow \left( 0,\infty \right) $ is called a weight function. The
weighted modular is defined by 
\begin{equation*}
\varrho _{p(.),\vartheta }\left( f\right) =\dint\limits_{%
\mathbb{R}
^{d}}\left\vert f(x)\right\vert ^{p(x)}\vartheta \left( x\right) dx\text{.}
\end{equation*}%
The weighted variable exponent Lebesgue spaces $L_{\vartheta }^{p(.)}\left( 
\mathbb{R}
^{d}\right) $ consist of all measurable functions $f$ on $%
\mathbb{R}
^{d}$ endowed with the Luxemburg norm%
\begin{equation*}
\left\Vert f\right\Vert _{p\left( .\right) ,\vartheta }=\inf \left\{ \lambda
>0:\dint\limits_{%
\mathbb{R}
^{d}}\left\vert \frac{f(x)}{\lambda }\right\vert ^{p(x)}\vartheta \left(
x\right) dx\leq 1\right\} .
\end{equation*}%
When $\vartheta \left( x\right) =1,$ the space $L_{\vartheta }^{p(.)}\left( 
\mathbb{R}
^{d}\right) $ is the variable exponent Lebesgue space. The space $%
L_{\vartheta }^{p(.)}\left( 
\mathbb{R}
^{d}\right) $ is a Banach space with respect to $\left\Vert .\right\Vert
_{p(.),\vartheta }.$ Also, some basic properties of this space were
investigated in \cite{A1}, \cite{A2}, \cite{Kok}.

We set the weighted variable exponent Sobolev spaces $W_{\vartheta
}^{k,p\left( .\right) }\left( 
\mathbb{R}
^{d}\right) $ by

\begin{equation*}
W_{\vartheta }^{k,p(.)}(%
\mathbb{R}
^{d})=\left\{ f\in L_{\vartheta }^{p\left( .\right) }\left( 
\mathbb{R}
^{d}\right) :D^{\alpha }f\in L_{\vartheta }^{p(.)}(%
\mathbb{R}
^{d}),0\leq \left\vert \alpha \right\vert \leq k\right\}
\end{equation*}%
with the norm

\begin{equation*}
\left\Vert f\right\Vert _{k,p\left( .\right) ,\vartheta }=\sum_{0\leq
\left\vert \alpha \right\vert \leq k}\left\Vert D^{\alpha }f\right\Vert
_{p\left( .\right) ,\vartheta }
\end{equation*}%
where $\alpha \in 
\mathbb{N}
_{0}^{d}$ is a multiindex, $\left\vert \alpha \right\vert =\alpha
_{1}+\alpha _{2}+...+\alpha _{d},$ and $D^{\alpha }=\frac{\partial
^{\left\vert \alpha \right\vert }}{\partial _{x_{1}}^{\alpha _{1}}\partial
_{x_{2}}^{\alpha _{2}}...\partial _{x_{d}}^{\alpha _{d}}}.$ It is already
known that $W_{\vartheta }^{k,p(.)}\left( 
\mathbb{R}
^{d}\right) $ is a reflexive Banach space.

Now, let $1<p^{-}\leq p\left( .\right) \leq p^{+}<\infty $, $k\in 
\mathbb{N}
$ and $\vartheta ^{-\frac{1}{p\left( .\right) -1}}\in L_{loc}^{1}\left( 
\mathbb{R}
^{d}\right) .$ Thus, the embedding $L_{\vartheta }^{p\left( .\right) }\left( 
\mathbb{R}
^{d}\right) \hookrightarrow L_{loc}^{1}\left( 
\mathbb{R}
^{d}\right) $ holds and then the weighted variable exponent Sobolev spaces $%
W_{\vartheta }^{k,p\left( .\right) }\left( 
\mathbb{R}
^{d}\right) $ is well-defined by [\cite{A2}, Proposition 2.1].

In particular, the space $W_{\vartheta }^{1,p\left( .\right) }\left( 
\mathbb{R}
^{d}\right) $ is defined by%
\begin{equation*}
W_{\vartheta }^{1,p(.)}\left( 
\mathbb{R}
^{d}\right) =\left\{ f\in L_{\vartheta }^{p\left( .\right) }\left( 
\mathbb{R}
^{d}\right) :\left\vert \nabla f\right\vert \in L_{\vartheta }^{p(.)}(%
\mathbb{R}
^{d})\right\} .
\end{equation*}%
The function $\rho _{1,p\left( .\right) ,\vartheta }:W_{\vartheta }^{1,p(.)}(%
\mathbb{R}
^{d})\longrightarrow \left[ 0,\infty \right) $ is shown as $\rho _{1,p\left(
.\right) ,\vartheta }\left( f\right) =\rho _{p\left( .\right) ,\vartheta
}\left( f\right) +\rho _{p\left( .\right) ,\vartheta }\left( \left\vert
\nabla f\right\vert \right) .$ Also, the norm $\left\Vert f\right\Vert
_{1,p\left( .\right) ,\vartheta }=\left\Vert f\right\Vert _{p\left( .\right)
,\vartheta }+\left\Vert \nabla f\right\Vert _{p\left( .\right) ,\vartheta }$
makes the space $W_{\vartheta }^{1,p\left( .\right) }\left( 
\mathbb{R}
^{d}\right) $ a Banach space. The local weighted variable exponent Sobolev
space $W_{\vartheta ,loc}^{1,p\left( .\right) }\left( 
\mathbb{R}
^{d}\right) $ is defined in the classical way. More information on the
classic theory of variable exponent spaces can be found in \cite{Dien4},\cite%
{K}.

Let $\Omega \subset 
\mathbb{R}
^{d}$ is bounded and $\vartheta $ is a weight function$.$ It is known that a
function $f\in C_{0}^{\infty }\left( \Omega \right) $ satisfy Poincar\'{e}
inequality in $L_{\vartheta }^{1}(\Omega )$ if and only if the inequality%
\begin{equation*}
\dint\limits_{\Omega }\left\vert f(x)\right\vert \vartheta \left( x\right)
dx\leq c\left( diam\text{ }\Omega \right) \dint\limits_{\Omega }\left\vert
\nabla f(x)\right\vert \vartheta \left( x\right) dx
\end{equation*}%
holds \cite{Hei}.

Unal and Ayd\i n \cite{Unal} defined an alternative capacity -called
relative $\left( p\left( .\right) ,\vartheta \right) $-capacity-for Sobolev
capacity in sense to \cite{Har1}. For this, they recall that%
\begin{equation*}
C_{0}(\Omega )=\left\{ f:\Omega \longrightarrow 
\mathbb{R}
:f\text{ is continuous and supp}f\subset \Omega \text{ is compact}\right\} ,
\end{equation*}%
where supp$f$ is the support of $f$. Suppose that $K$ is a compact subset of 
$\Omega .$ Also, they denote%
\begin{equation*}
R_{p\left( .\right) ,\vartheta }\left( K,\Omega \right) =\left\{ f\in
W_{\vartheta }^{1,p(.)}\left( \Omega \right) \cap C_{0}\left( \Omega \right)
:f>1\text{ on }K\text{ and }f\geq 0\right\}
\end{equation*}%
and define%
\begin{equation*}
cap_{p\left( .\right) ,\vartheta }^{\ast }\left( K,\Omega \right)
=\inf_{f\in R_{p\left( .\right) ,\vartheta }\left( K,\Omega \right)
}\dint\limits_{\Omega }\left\vert \bigtriangledown f\left( x\right)
\right\vert ^{p\left( x\right) }\vartheta \left( x\right) dx.
\end{equation*}

In addition, if $U\subset \Omega $ is open, then%
\begin{equation*}
cap_{p\left( .\right) ,\vartheta }\left( U,\Omega \right) =\sup_{\substack{ %
K\subset U  \\ compact}}cap_{p\left( .\right) ,\vartheta }^{\ast }\left(
K,\Omega \right) ,
\end{equation*}%
and also for an arbitrary set $E\subset \Omega $ we define%
\begin{equation*}
cap_{p\left( .\right) ,\vartheta }\left( E,\Omega \right) =\inf_{\substack{ %
E\subset U\subset \Omega  \\ U\text{ }open}}cap_{p\left( .\right) ,\vartheta
}\left( U,\Omega \right) .
\end{equation*}%
They call $cap_{p\left( .\right) ,\vartheta }\left( E,\Omega \right) $ the
variational $\left( p\left( .\right) ,\vartheta \right) $-capacity of $E$
relative to $\Omega $, briefly the relative $\left( p\left( .\right)
,\vartheta \right) $-capacity. Also, the relative $\left( p\left( .\right)
,\vartheta \right) $-capacity has the following properties.

\begin{enumerate}
\item[P1] . $cap_{p\left( .\right) ,\vartheta }\left( \emptyset ,\Omega
\right) =0.$

\item[P2] . If $E_{1}\subset E_{2}\subset \Omega _{2}\subset \Omega _{1},$
then $cap_{p\left( .\right) ,\vartheta }\left( E_{1},\Omega _{1}\right) \leq
cap_{p\left( .\right) ,\vartheta }\left( E_{2},\Omega _{2}\right) .$

\item[P3] . If $E$ is a subset of $\Omega ,$ then%
\begin{equation*}
cap_{p\left( .\right) ,\vartheta }\left( E,\Omega \right) =\inf_{\substack{ %
E\subset U\subset \Omega  \\ U\text{ }open}}cap_{p\left( .\right) ,\vartheta
}\left( U,\Omega \right) .
\end{equation*}

\item[P4] . If $K_{1}$ and $K_{2}$ are compact subsets of $\Omega ,$ then%
\begin{eqnarray*}
cap_{p\left( .\right) ,\vartheta }\left( K_{1}\cup K_{2},\Omega \right)
+cap_{p\left( .\right) ,\vartheta }\left( K_{1}\cap K_{2},\Omega \right)
&\leq &cap_{p\left( .\right) ,\vartheta }\left( K_{1},\Omega \right) \\
&&+cap_{p\left( .\right) ,\vartheta }\left( K_{2},\Omega \right) .
\end{eqnarray*}

\item[P5] . Let $K_{n}$ is a decreasing sequence of compact subsets of $%
\Omega $ for $n\in 
\mathbb{N}
.$ Then%
\begin{equation*}
\lim_{n\longrightarrow \infty }cap_{p\left( .\right) ,\vartheta }\left(
K_{n},\Omega \right) =cap_{p\left( .\right) ,\vartheta }\left(
\tbigcap\limits_{n=1}^{\infty }K_{n},\Omega \right) .
\end{equation*}

\item[P6] . If $E_{n}$ is an increasing sequence of subsets of $\Omega $ for 
$n\in 
\mathbb{N}
,$ then%
\begin{equation*}
\lim_{n\longrightarrow \infty }cap_{p\left( .\right) ,\vartheta }\left(
E_{n},\Omega \right) =cap_{p\left( .\right) ,\vartheta }\left(
\tbigcup\limits_{n=1}^{\infty }E_{n},\Omega \right) .
\end{equation*}

\item[P7] . If $E_{n}\subset \Omega $ for $n\in 
\mathbb{N}
,$ then%
\begin{equation*}
cap_{p\left( .\right) ,\vartheta }\left( \tbigcup\limits_{n=1}^{\infty
}E_{n},\Omega \right) \leq \tsum\limits_{n=1}^{\infty }cap_{p\left( .\right)
,\vartheta }\left( E_{n},\Omega \right) .
\end{equation*}
\end{enumerate}

\begin{theorem}
\label{fineatif1}\cite{Unal}If $cap_{p\left( .\right) ,\vartheta }\left(
B\left( x_{0},r\right) ,B\left( x_{0},2r\right) \right) \geq 1$ and $\mu
_{\vartheta }$ is a doubling measure, then there exist positive constants $%
C_{1},C_{2}$ such that%
\begin{equation*}
C_{1}\mu _{\vartheta }\left( B\left( x_{0},r\right) \right) \leq
cap_{p\left( .\right) ,\vartheta }\left( B\left( x_{0},r\right) ,B\left(
x_{0},2r\right) \right) \leq C_{2}\mu _{\vartheta }\left( B\left(
x_{0},r\right) \right)
\end{equation*}%
where the constants depend on $r,p^{-},p^{+}$, constants of doubling measure
and Poincar\'{e} inequality.
\end{theorem}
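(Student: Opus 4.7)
The plan is to establish the two inequalities separately. The upper bound is essentially an explicit cutoff construction combined with the doubling property, while the lower bound chains together the weighted Poincar\'{e} inequality recorded in the preliminaries, H\"{o}lder's inequality in the weighted variable exponent Lebesgue space, and a modular-norm comparison that crucially uses the hypothesis $cap_{p(.),\vartheta}(B(x_0,r),B(x_0,2r)) \geq 1$.

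For the upper bound I would exhibit an explicit admissible function $f \in R_{p(.),\vartheta}(B(x_0,r),B(x_0,2r))$: a suitably inward-truncated Lipschitz cutoff of the form
\[
 f(x) = (1+\varepsilon)\min\!\bigl\{1,\max\{0,2 - |x-x_0|/r\}\bigr\},
\]
with small $\varepsilon>0$ so that $f>1$ on $B(x_0,r)$ and with support compactly contained in $B(x_0,2r)$. The gradient is supported on the annulus $A=B(x_0,2r)\setminus B(x_0,r)$ and satisfies $|\nabla f|\leq (1+\varepsilon)/r$. Since $p(x)\in[p^-,p^+]$, the pointwise bound $|\nabla f|^{p(x)}\leq (1+\varepsilon)^{p^+}\max\{r^{-p^-},r^{-p^+}\}$ gives
\[
 \int_{B(x_0,2r)}|\nabla f|^{p(x)}\vartheta(x)\,dx \leq C(r,p^-,p^+)\,\mu_\vartheta\!\bigl(B(x_0,2r)\bigr),
\]
and the doubling estimate $\mu_\vartheta(B(x_0,2r))\leq c_d\,\mu_\vartheta(B(x_0,r))$ delivers $C_2$ with the stated dependencies.

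For the lower bound I fix an arbitrary admissible $f\in R_{p(.),\vartheta}(B(x_0,r),B(x_0,2r))$. Since $f>1$ on $B(x_0,r)$,
\[
 \mu_\vartheta\!\bigl(B(x_0,r)\bigr)\leq \int_{B(x_0,r)} f\,\vartheta\,dx \leq \int_{B(x_0,2r)} f\,\vartheta\,dx.
\]
Approximating $f\in W^{1,p(.)}_{\vartheta}\cap C_0$ by $C_0^{\infty}(B(x_0,2r))$ functions and applying the weighted Poincar\'{e} inequality recalled in the preliminaries bounds the right-hand side by $2cr\int_{B(x_0,2r)}|\nabla f|\,\vartheta\,dx$. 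H\"{o}lder's inequality in the weighted variable exponent Lebesgue space then gives
\[
 \int_{B(x_0,2r)}|\nabla f|\,\vartheta\,dx \leq 2\,\||\nabla f|\|_{p(.),\vartheta}\,\|\chi_{B(x_0,2r)}\|_{p'(.),\vartheta},
\]
and a direct modular computation bounds the second factor by a power of $\mu_\vartheta(B(x_0,2r))$ involving $p'^{-}$ and $p'^{+}$, which by doubling reduces to a power of $\mu_\vartheta(B(x_0,r))$.

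The main obstacle, and the step that forces the hypothesis $cap_{p(.),\vartheta}\geq 1$, is converting the variable exponent norm $\||\nabla f|\|_{p(.),\vartheta}$ back into the modular $\int|\nabla f|^{p(x)}\vartheta\,dx$ that defines the capacity; because $p(\cdot)$ is not constant, the usual identity $\|g\|^{p}=\rho(g)$ is replaced by inequalities whose direction depends on whether $\|g\|\lessgtr 1$. Since the infimum in the capacity is at least $1$, every admissible $f$ satisfies $\rho_{p(.),\vartheta}(|\nabla f|)\geq 1$, forcing $\||\nabla f|\|_{p(.),\vartheta}\geq 1$; in that regime the standard comparison yields $\||\nabla f|\|_{p(.),\vartheta}\leq \rho_{p(.),\vartheta}(|\nabla f|)^{1/p^-}$. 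Substituting into the chain above, rearranging the powers (using that the surplus exponent $1-1/p'^{-}=1/p^{-}$ matches the $1/p^{-}$ from the modular-norm comparison), taking the infimum over admissible $f$, and absorbing the remaining factors into a constant $C_1$ depending on $r,p^-,p^+$, the doubling constant, and the Poincar\'{e} constant, closes the lower bound.
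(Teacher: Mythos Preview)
The paper does not actually prove this theorem: immediately after stating Theorems~\ref{fineatif1} and~\ref{fineatif2} it writes ``The proofs can be found in \cite{Unal}.'' There is therefore no proof in the present paper to compare your proposal against.

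That said, your outline follows the standard route one would expect in \cite{Unal} or in the analogous constant-exponent result in \cite{Hei}: an explicit Lipschitz cutoff for the upper bound, and the chain Poincar\'{e} $\rightarrow$ H\"{o}lder $\rightarrow$ modular--norm comparison for the lower bound, with the hypothesis $cap_{p(.),\vartheta}\geq 1$ used exactly where it is needed to force $\||\nabla f|\|_{p(.),\vartheta}\geq 1$ and hence the ``correct'' direction of the norm--modular inequality. One bookkeeping point to tighten: your claimed identity ``$1-1/p'^{-}=1/p^{-}$'' is not quite right, since $(p')^{-}=(p^{+})'$ gives $1-1/(p')^{-}=1/p^{+}$, while it is $(p')^{+}=(p^{-})'$ that yields $1-1/(p')^{+}=1/p^{-}$. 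The exponents therefore only match cleanly in one of the two regimes $\mu_{\vartheta}(B(x_{0},2r))\lessgtr 1$; in the other regime you pick up a residual factor $\mu_{\vartheta}(B(x_{0},r))^{1/p^{-}-1/p^{+}}$. This is harmless here because the statement allows the constants to depend on $r$ (and the doubling and Poincar\'{e} constants), so the residual factor can be absorbed, but you should say so explicitly rather than asserting that the exponents ``match''.
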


\begin{theorem}
\label{fineatif2}\cite{Unal}If $E\subset B\left( x_{0},r\right) ,$ $%
cap_{p\left( .\right) ,\vartheta }\left( E,B\left( x_{0},4r\right) \right)
\geq 1$ and $0<r\leq s\leq 2r,$ then the inequality%
\begin{equation*}
\frac{1}{C}cap_{p\left( .\right) ,\vartheta }\left( E,B\left(
x_{0},2r\right) \right) \leq cap_{p\left( .\right) ,\vartheta }\left(
E,B\left( x_{0},2s\right) \right) \leq cap_{p\left( .\right) ,\vartheta
}\left( E,B\left( x_{0},2r\right) \right)
\end{equation*}%
holds where the constants depend on $r,p^{-},p^{+}$, constants of doubling
measure and Poincar\'{e} inequality.
\end{theorem}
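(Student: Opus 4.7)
The plan is to handle the two inequalities separately: the right-hand bound is essentially free from monotonicity, while the left-hand bound requires the standard cutoff trick. Since $r\leq s$ gives $B(x_{0},2r)\subseteq B(x_{0},2s)$, any test function admissible for $cap_{p(.),\vartheta}^{\ast}(K,B(x_{0},2r))$ (for a compact $K\subset E$) extends by zero to one admissible for $cap_{p(.),\vartheta}^{\ast}(K,B(x_{0},2s))$ with the same modular. Passing to open neighborhoods and then to arbitrary $E$ via property P3 gives the right-hand inequality immediately.

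For the lower bound I would fix a smooth cutoff $\eta\in C_{0}^{\infty}(B(x_{0},2r))$ with $\eta\equiv 1$ on $B(x_{0},r)$, $0\leq\eta\leq 1$, and $|\nabla\eta|\leq C/r$. Given any compact $K\subset E\subset B(x_{0},r)$ and $f\in R_{p(.),\vartheta}(K,B(x_{0},2s))$, the product $\eta f$ is continuous, nonnegative, has compact support in $B(x_{0},2r)$, belongs to $W_{\vartheta}^{1,p(.)}(B(x_{0},2r))$, and satisfies $\eta f=f>1$ on $K$. Thus $\eta f\in R_{p(.),\vartheta}(K,B(x_{0},2r))$ and serves as the desired admissible competitor.

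The energy of $\eta f$ is then controlled by the product rule and the convexity estimate $(a+b)^{p(x)}\leq 2^{p^{+}-1}(a^{p(x)}+b^{p(x)})$:
\begin{equation*}
\int_{B(x_{0},2r)}|\nabla(\eta f)|^{p(x)}\vartheta\,dx\leq C\Big(\int_{B(x_{0},2s)}|\nabla f|^{p(x)}\vartheta\,dx+\int_{B(x_{0},2s)}|f|^{p(x)}|\nabla\eta|^{p(x)}\vartheta\,dx\Big).
\end{equation*}
The first integral is already at the scale of $cap_{p(.),\vartheta}^{\ast}(K,B(x_{0},2s))$. For the second, $|\nabla\eta|^{p(x)}\leq (C/r)^{\max\{p^{+},p^{-}\}}$, and since $f\in C_{0}(B(x_{0},2s))$ has compact support, the weighted variable-exponent Poincar\'{e} inequality on $B(x_{0},2s)\subset B(x_{0},4r)$ lets us absorb $\int|f|^{p(x)}\vartheta\,dx$ into $\int|\nabla f|^{p(x)}\vartheta\,dx$ at the cost of a constant depending on $r,p^{\pm}$, and the Poincar\'{e} and doubling constants. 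Taking the infimum over $f$, then over compact $K\subset E$ and open neighborhoods of $E$, produces the claimed comparability.

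The main technical obstacle will be the Poincar\'{e} absorption step, where the well-known dichotomy between the modular and the Luxemburg norm in the variable exponent setting (sharp constants differ when $\varrho\leq 1$ and $\varrho>1$) threatens to spoil the constant. Here is precisely where the hypothesis $cap_{p(.),\vartheta}(E,B(x_{0},4r))\geq 1$ enters: it forces us to work in the non-degenerate regime, ensuring via Theorem \ref{fineatif1} that $cap_{p(.),\vartheta}(E,B(x_{0},2s))$ is bounded below by a multiple of the weighted measure and hence that admissible test functions carry nontrivial gradient mass. This uniformizes the Poincar\'{e} constant across the family of admissible $f$, yields the factor $1/C$, and lets the chain of estimates close into the two-sided comparison.
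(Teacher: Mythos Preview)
The paper does not actually prove this statement: immediately after Theorem~\ref{fineatif2} it writes ``The proofs can be found in \cite{Unal}'' and moves on. There is therefore no in-paper argument to compare your proposal against.

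That said, your outline is the standard route for such relative-capacity comparability results and is almost certainly what \cite{Unal} does. The upper bound is indeed immediate from monotonicity in the ambient domain (property~P2), and the lower bound via the cutoff $\eta\in C_{0}^{\infty}(B(x_{0},2r))$ with $\eta\equiv 1$ on $B(x_{0},r)\supset E$, followed by the product rule and a Poincar\'e absorption of the zero-order term, is exactly the expected mechanism. You have also correctly located the one genuinely delicate point: in the variable-exponent setting the Poincar\'e inequality is a norm inequality, and converting it to a modular inequality requires knowing which regime ($\varrho\leq 1$ versus $\varrho>1$) you are in; the hypothesis $cap_{p(.),\vartheta}(E,B(x_{0},4r))\geq 1$ is precisely what pins this down and keeps the constant uniform. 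One small cosmetic point: the bound $|\nabla\eta|^{p(x)}\leq (C/r)^{\max\{p^{+},p^{-}\}}$ should be written as $\max\{(C/r)^{p^{+}},(C/r)^{p^{-}}\}$, since which exponent dominates depends on whether $C/r\geq 1$.
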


The proofs can be found in \cite{Unal}.

We say that a property holds $\left( p(.),\vartheta \right) $%
-quasieverywhere if it satisfies except in a set of capacity zero. Recall
also a \ function $f$ is $\left( p(.),\vartheta \right) $-quasicontinuous in 
$%
\mathbb{R}
^{d}$ if for each $\varepsilon >0$ there exists a set $A$ with the capacity
of $A$ is less than $\varepsilon $ such that $f$ restricted to $%
\mathbb{R}
^{d}-A$ is continuous. If the capacity is an outer capacity, we can suppose
that $A$ is open. More detail can be found in \cite{A2}.

Let $\Omega \subset 
\mathbb{R}
^{d}$ be an open set. The space $W_{0,\vartheta }^{1,p(.)}\left( \Omega
\right) $ is denoted as the set of all measurable functions $f$ if there
exists a $\left( p(.),\vartheta \right) $-q.c. function $f^{\ast }\in
W_{\vartheta }^{1,p(.)}\left( 
\mathbb{R}
^{d}\right) $ such that $f=f^{\ast }$ a.e. in $\Omega $ and $f^{\ast }=0$ $%
\left( p(.),\vartheta \right) $-q.e. in $%
\mathbb{R}
^{d}-\Omega $. In other words, $f\in W_{0,\vartheta }^{1,p(.)}\left( \Omega
\right) ,$ if there exist a $\left( p(.),\vartheta \right) $-q.c. function $%
f^{\ast }\in W_{\vartheta }^{1,p(.)}\left( 
\mathbb{R}
^{d}\right) $ such that the trace of $f^{\ast }$ vanishes. More detail about
the space can be seen by \cite{HarLat}, \cite{Hei}, \cite{Unal2}.

Moreover, $A\Subset B$ means that $\overline{A}$ is a compact subset of $B.$
Throughout this paper, we assume that $1<p^{-}\leq p\left( .\right) \leq
p^{+}<\infty $ and $\vartheta ^{-\frac{1}{p\left( .\right) -1}}\in
L_{loc}^{1}\left( 
\mathbb{R}
^{d}\right) .$ Also, we will denote%
\begin{equation*}
\mu _{\vartheta }\left( \Omega \right) =\dint\limits_{\Omega }\vartheta
\left( x\right) dx.
\end{equation*}

\section{The $\left( p\left( .\right) ,\protect\vartheta \right) $-Thinness
and Fine Topology}

Now, we present $\left( p\left( .\right) ,\vartheta \right) $-thinness and
consider some properties of this thinness before considering the fine
topology.

\begin{definition}
A set $E\subset 
\mathbb{R}
^{d}$ is $\left( p\left( .\right) ,\vartheta \right) $-thin at $x\in 
\mathbb{R}
^{d}$ if%
\begin{equation}
\dint\limits_{0}^{1}\left( \frac{cap_{p\left( .\right) ,\vartheta }\left(
E\cap B\left( x,r\right) ,B\left( x,2r\right) \right) }{cap_{p\left(
.\right) ,\vartheta }\left( B\left( x,r\right) ,B\left( x,2r\right) \right) }%
\right) ^{\frac{1}{p\left( x\right) -1}}\frac{dr}{r}<\infty .  \label{thin}
\end{equation}%
Also, we say that $E$ is $\left( p\left( .\right) ,\vartheta \right) $-thick
at $x\in 
\mathbb{R}
^{d}$ if $E$ is not $\left( p\left( .\right) ,\vartheta \right) $-thin at $%
x\in 
\mathbb{R}
^{d}.$
\end{definition}

In the definition of $\left( p\left( .\right) ,\vartheta \right) $-thinness
we make a convention that the integral is 1 if $cap_{p\left( .\right)
,\vartheta }\left( B\left( x,r\right) ,B\left( x,2r\right) \right) =0$.
Also, the integral in (\ref{thin}) is usually called the Wiener type
integral, briefly Wiener integral, as%
\begin{equation*}
W_{p\left( .\right) ,\vartheta }\left( E,x\right)
=\dint\limits_{0}^{1}\left( \frac{cap_{p\left( .\right) ,\vartheta }\left(
E\cap B\left( x,r\right) ,B\left( x,2r\right) \right) }{cap_{p\left(
.\right) ,\vartheta }\left( B\left( x,r\right) ,B\left( x,2r\right) \right) }%
\right) ^{\frac{1}{p\left( x\right) -1}}\frac{dr}{r}.
\end{equation*}%
In addition, we denote the Wiener sum $W_{p\left( .\right) ,\vartheta
}^{sum}\left( E,x\right) $ as%
\begin{equation*}
W_{p\left( .\right) ,\vartheta }^{sum}\left( E,x\right)
=\dsum\limits_{i=0}^{\infty }\left( \frac{cap_{p\left( .\right) ,\vartheta
}\left( E\cap B\left( x,2^{-i}\right) ,B\left( x,2^{1-i}\right) \right) }{%
cap_{p\left( .\right) ,\vartheta }\left( B\left( x,2^{-i}\right) ,B\left(
x,2^{1-i}\right) \right) }\right) ^{\frac{1}{p\left( x\right) -1}}.
\end{equation*}%
Now we give a relationship between these two notions. The proof can be found
in \cite{Unal}.

\begin{theorem}
\label{finedenklik}Assume that the hypotheses of Theorem \ref{fineatif1} and
Theorem \ref{fineatif2} are hold. Then there exist positive constants $%
C_{1},C_{2}$ such that%
\begin{equation*}
C_{1}W_{p\left( .\right) ,\vartheta }\left( E,x\right) \leq W_{p\left(
.\right) ,\vartheta }^{sum}\left( E,x\right) \leq C_{2}W_{p\left( .\right)
,\vartheta }\left( E,x\right)
\end{equation*}%
for every $E\subset 
\mathbb{R}
^{d}$ and $x_{0}\notin E.$ In particular, $W_{p\left( .\right) ,\vartheta
}\left( E,x_{0}\right) $ is finite if and only if $W_{p\left( .\right)
,\vartheta }^{sum}\left( E,x_{0}\right) $ is finite.
\end{theorem}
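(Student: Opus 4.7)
The plan is to split the Wiener integral into dyadic pieces and show each is comparable to a term of the Wiener sum. Writing
\[
W_{p(.),\vartheta}(E,x_0) = \sum_{i=0}^{\infty} \int_{2^{-i-1}}^{2^{-i}} \left(\frac{cap_{p(.),\vartheta}(E\cap B(x_0,r),B(x_0,2r))}{cap_{p(.),\vartheta}(B(x_0,r),B(x_0,2r))}\right)^{\frac{1}{p(x_0)-1}} \frac{dr}{r}
\]
and noting $\int_{2^{-i-1}}^{2^{-i}} dr/r = \log 2$, it suffices to control the integrand on each dyadic interval $[2^{-i-1},2^{-i}]$, uniformly in $i$ and $r$, from above and below by a constant multiple of the $i$-th (respectively $(i{+}1)$-th) summand in $W_{p(.),\vartheta}^{sum}(E,x_0)$.

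For the numerator, monotonicity P2 directly yields the upper bound
\[
cap_{p(.),\vartheta}(E\cap B(x_0,r),B(x_0,2r)) \le cap_{p(.),\vartheta}(E\cap B(x_0,2^{-i}),B(x_0,2^{-i+1})),
\]
since $E\cap B(x_0,r)\subset E\cap B(x_0,2^{-i})$ and $B(x_0,2r)\subset B(x_0,2^{-i+1})$. For a matching lower bound I first shrink $E\cap B(x_0,r)$ to $E\cap B(x_0,2^{-i-1})$ and enlarge the outer ball to $B(x_0,2^{-i+1})$ (again by P2), then apply Theorem \ref{fineatif2} with inner radius $2^{-i-1}$ and $s=2^{-i}$ to trade the outer ball $B(x_0,2^{-i+1})$ for $B(x_0,2^{-i})$ at the cost of a uniform constant, arriving at a lower bound by $\frac{1}{C}cap_{p(.),\vartheta}(E\cap B(x_0,2^{-i-1}),B(x_0,2^{-i}))$. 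For the denominator, Theorem \ref{fineatif1} combined with the doubling property shows $cap_{p(.),\vartheta}(B(x_0,r),B(x_0,2r))\asymp \mu_\vartheta(B(x_0,r))$, and doubling makes these comparable to both $\mu_\vartheta(B(x_0,2^{-i}))$ and $\mu_\vartheta(B(x_0,2^{-i-1}))$, hence to the dyadic denominator capacities, with $i$-independent constants. Since the exponent $\frac{1}{p(x_0)-1}$ lies in the fixed interval $\bigl[\frac{1}{p^+-1},\frac{1}{p^--1}\bigr]$, raising to this power preserves the comparability; integrating over $[2^{-i-1},2^{-i}]$ contributes a factor of $\log 2$, and summing over $i$ produces both inequalities of the theorem, the shift by one index in the lower bound only affecting the overall constant.

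The main technical obstacle is the normalization hypotheses built into Theorems \ref{fineatif1} and \ref{fineatif2}, namely $cap_{p(.),\vartheta}(B(x_0,r),B(x_0,2r))\ge 1$ and $cap_{p(.),\vartheta}(E\cap B(x_0,r),B(x_0,4r))\ge 1$. For radii where these fail, the stated convention forces the integrand to equal $1$, so a finite initial segment of the integral and of the sum must be handled separately; the hypothesis $x_0\notin E$ is what lets the dyadic analysis extend to arbitrarily small scales, ensuring only a bounded exceptional contribution that is absorbed into $C_1$ and $C_2$. The final assertion about finiteness is then immediate from the two-sided inequality.
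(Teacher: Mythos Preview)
The paper does not actually prove this theorem: immediately after the statement it writes ``The proof can be found in \cite{Unal}.'' So there is no in-text argument to compare against. Your dyadic-splitting strategy is the standard route for such Wiener integral/sum comparisons and is almost certainly what the cited reference does; the handling of the denominator via Theorem~\ref{fineatif1} plus doubling, and of the exponent via $p^{-}\le p(\cdot)\le p^{+}$, is correct.

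There is, however, one genuine slip in your argument. The claimed upper bound on the numerator,
\[
cap_{p(.),\vartheta}\bigl(E\cap B(x_0,r),B(x_0,2r)\bigr)\ \le\ cap_{p(.),\vartheta}\bigl(E\cap B(x_0,2^{-i}),B(x_0,2^{1-i})\bigr),
\]
does \emph{not} follow from P2 alone. Property P2 says the relative capacity is increasing in the first argument but \emph{decreasing} in the second, so the inclusion $B(x_0,2r)\subset B(x_0,2^{1-i})$ you cite pushes the inequality the wrong way. The repair is to invoke Theorem~\ref{fineatif2} here as well, not only for the lower bound: with $E'=E\cap B(x_0,r)\subset B(x_0,r)$ and $s=2^{-i}\in[r,2r]$, Theorem~\ref{fineatif2} gives
\[
cap_{p(.),\vartheta}\bigl(E',B(x_0,2r)\bigr)\ \le\ C\,cap_{p(.),\vartheta}\bigl(E',B(x_0,2^{1-i})\bigr),
\]
after which P2 in the first argument (enlarging $E'$ to $E\cap B(x_0,2^{-i})$) yields the desired bound. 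Once this is corrected, the rest of your outline goes through. Your final paragraph on the normalization hypotheses is somewhat imprecise---the paper's convention concerns the case $cap_{p(.),\vartheta}(B(x,r),B(x,2r))=0$, not $<1$---but since the theorem is stated under the standing assumption that the hypotheses of Theorems~\ref{fineatif1} and~\ref{fineatif2} hold, this is not a substantive gap.
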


The previous theorem tell us that the notions $W_{p\left( .\right)
,\vartheta }$ and $W_{p\left( .\right) ,\vartheta }^{sum}$ are equivalent
under some conditions. In some cases, the Wiener sum $W_{p\left( .\right)
,\vartheta }^{sum}$ is more practical than the Wiener integral $W_{p\left(
.\right) ,\vartheta }$.

\begin{definition}
A set $U\subset 
\mathbb{R}
^{d}$ is called $\left( p\left( .\right) ,\vartheta \right) $-finely open if 
$%
\mathbb{R}
^{d}-U$ is $\left( p\left( .\right) ,\vartheta \right) $-thin at $x\in U.$
Equivalently, a set is $\left( p\left( .\right) ,\vartheta \right) $-finely
closed if it includes all points where it is not $\left( p\left( .\right)
,\vartheta \right) $-thin. Moreover, the fine interior of $A,$ briefly
fine-int$A$, is the largest $\left( p\left( .\right) ,\vartheta \right) $%
-finely open set contained in $A.$ In a similar way, the fine closure of $F,$
briefly fine-clo$F,$ is the smallest $\left( p\left( .\right) ,\vartheta
\right) $-finely closed set containing $F.$
\end{definition}

\begin{theorem}
The $\left( p\left( .\right) ,\vartheta \right) $-fine topology on $%
\mathbb{R}
^{d}$ is generated by $\left( p\left( .\right) ,\vartheta \right) $-finely
open sets.
\end{theorem}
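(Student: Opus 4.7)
The plan is to verify the three axioms for a topology directly against the definition of $(p(\cdot),\vartheta)$-finely open. Namely I will show that (i) $\emptyset$ and $\mathbb{R}^d$ are finely open, (ii) arbitrary unions of finely open sets are finely open, and (iii) finite intersections of finely open sets are finely open. Equivalently, taking complements, I must show that a finite union of $(p(\cdot),\vartheta)$-thin sets is thin, and that an intersection of thin sets is thin at any point where at least one of them is.

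First I would dispose of (i): $\mathbb{R}^d\setminus\mathbb{R}^d=\emptyset$ satisfies $cap_{p(\cdot),\vartheta}(\emptyset\cap B(x,r),B(x,2r))=0$ by P1, so $W_{p(\cdot),\vartheta}(\emptyset,x)=0<\infty$ for every $x$, and vacuously $\emptyset$ is finely open. For (ii), let $\{U_\alpha\}$ be any family of finely open sets and pick $x\in\bigcup_\alpha U_\alpha$. Then $x\in U_{\alpha_0}$ for some $\alpha_0$, so $\mathbb{R}^d\setminus U_{\alpha_0}$ is thin at $x$. Since
\[
\mathbb{R}^d\setminus\bigcup_\alpha U_\alpha\;\subset\;\mathbb{R}^d\setminus U_{\alpha_0},
\]
monotonicity of the relative capacity (P2) yields
\[
cap_{p(\cdot),\vartheta}\!\left(\Bigl(\mathbb{R}^d\setminus\tbigcup_\alpha U_\alpha\Bigr)\cap B(x,r),B(x,2r)\right)\leq cap_{p(\cdot),\vartheta}\!\left((\mathbb{R}^d\setminus U_{\alpha_0})\cap B(x,r),B(x,2r)\right),
\]
so termwise domination gives $W_{p(\cdot),\vartheta}(\mathbb{R}^d\setminus\bigcup_\alpha U_\alpha,x)\leq W_{p(\cdot),\vartheta}(\mathbb{R}^d\setminus U_{\alpha_0},x)<\infty$.

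The main step is (iii). Suppose $U_1,U_2$ are finely open and $x\in U_1\cap U_2$. Writing $E_i=\mathbb{R}^d\setminus U_i$, both $E_i$ are thin at $x$, and $\mathbb{R}^d\setminus(U_1\cap U_2)=E_1\cup E_2$. By countable subadditivity (P7) applied to $E_1\cap B(x,r)$ and $E_2\cap B(x,r)$,
\[
cap_{p(\cdot),\vartheta}\!\left((E_1\cup E_2)\cap B(x,r),B(x,2r)\right)\leq cap_{p(\cdot),\vartheta}(E_1\cap B(x,r),B(x,2r))+cap_{p(\cdot),\vartheta}(E_2\cap B(x,r),B(x,2r)).
\]
Setting $\alpha(x)=1/(p(x)-1)>0$ and using the elementary inequality $(a+b)^{\alpha(x)}\leq 2^{\max\{0,\alpha(x)-1\}}(a^{\alpha(x)}+b^{\alpha(x)})$, dividing through by $cap_{p(\cdot),\vartheta}(B(x,r),B(x,2r))^{\alpha(x)}$ and integrating against $dr/r$ over $(0,1)$, one obtains
\[
W_{p(\cdot),\vartheta}(E_1\cup E_2,x)\leq C\bigl(W_{p(\cdot),\vartheta}(E_1,x)+W_{p(\cdot),\vartheta}(E_2,x)\bigr)<\infty,
\]
where $C$ depends only on $p^-$ and $p^+$. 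Hence $E_1\cup E_2$ is thin at $x$ and $U_1\cap U_2$ is finely open; iterating gives stability under finite intersections.

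The one subtlety to flag is the convention that the integrand is declared to equal $1$ whenever $cap_{p(\cdot),\vartheta}(B(x,r),B(x,2r))=0$; with this convention, the ratio in the Wiener integral is defined on all of $(0,1)$, and on the set where the denominator vanishes it equals $1$ simultaneously for $E_1$, $E_2$ and $E_1\cup E_2$, so the inequality above is trivial there. The only real technical point is ensuring that the exponent $\alpha(x)$ is fixed once $x$ is fixed, so the pointwise algebraic inequality above is unambiguous; this is where the assumption $1<p^-\leq p(\cdot)\leq p^+<\infty$ is doing the work, giving a uniform constant. Once (i)--(iii) are verified, the family of $(p(\cdot),\vartheta)$-finely open sets satisfies the axioms of a topology, which is exactly the claim.
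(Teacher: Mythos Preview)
Your proposal is correct and follows essentially the same route as the paper: verify the three topology axioms directly, using P1 for $\emptyset$ and $\mathbb{R}^d$, monotonicity (P2) for arbitrary unions, and subadditivity (P7) together with the elementary power inequality for finite intersections. The paper handles $n$ sets at once (splitting into the cases $1<p(x)\le 2$ and $p(x)>2$, which is exactly your $\alpha(x)\ge 1$ versus $\alpha(x)<1$) rather than reducing to two sets and iterating, but this is a cosmetic difference; your explicit constant $2^{\max\{0,\alpha(x)-1\}}$ and your remark on the zero-denominator convention are, if anything, slightly more careful than the paper's presentation.
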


\begin{proof}
Firstly, we denote%
\begin{eqnarray*}
\tau _{F} &=&\left\{ E\subset 
\mathbb{R}
^{d}:\dint\limits_{0}^{1}\left( \frac{cap_{p\left( .\right) ,\vartheta
}\left( \left( 
\mathbb{R}
^{d}-E\right) \cap B\left( x,r\right) ,B\left( x,2r\right) \right) }{%
cap_{p\left( .\right) ,\vartheta }\left( B\left( x,r\right) ,B\left(
x,2r\right) \right) }\right) ^{\frac{1}{p\left( x\right) -1}}\frac{dr}{r}%
<\infty \right\} \cup \emptyset \\
&=&\left\{ E\subset 
\mathbb{R}
^{d}:\dint\limits_{0}^{1}\left( \frac{cap_{p\left( .\right) ,\vartheta
}\left( B\left( x,r\right) -E,B\left( x,2r\right) \right) }{cap_{p\left(
.\right) ,\vartheta }\left( B\left( x,r\right) ,B\left( x,2r\right) \right) }%
\right) ^{\frac{1}{p\left( x\right) -1}}\frac{dr}{r}<\infty \right\} \cup
\emptyset .
\end{eqnarray*}%
It is obvious that $\emptyset \in \tau _{F}.$ Since $cap_{p\left( .\right)
,\vartheta }\left( \emptyset ,B\left( x,2r\right) \right) =0,$ we have%
\begin{eqnarray*}
&&\dint\limits_{0}^{1}\left( \frac{cap_{p\left( .\right) ,\vartheta }\left(
B\left( x,r\right) -%
\mathbb{R}
^{d},B\left( x,2r\right) \right) }{cap_{p\left( .\right) ,\vartheta }\left(
B\left( x,r\right) ,B\left( x,2r\right) \right) }\right) ^{\frac{1}{p\left(
x\right) -1}}\frac{dr}{r} \\
&=&\dint\limits_{0}^{1}\left( \frac{cap_{p\left( .\right) ,\vartheta }\left(
\emptyset ,B\left( x,2r\right) \right) }{cap_{p\left( .\right) ,\vartheta
}\left( B\left( x,r\right) ,B\left( x,2r\right) \right) }\right) ^{\frac{1}{%
p\left( x\right) -1}}\frac{dr}{r}<\infty .
\end{eqnarray*}%
This follows that $%
\mathbb{R}
^{d}\in \tau _{F}.$ Now, we assert that finite intersections of $\left(
p\left( .\right) ,\vartheta \right) $-finely open sets are $\left( p\left(
.\right) ,\vartheta \right) $-finely open. Assume that $x\in
\tbigcap\limits_{i=1}^{n}U_{i}$ where $U_{1},U_{2},...,U_{n}$ are $\left(
p\left( .\right) ,\vartheta \right) $-finely open. Thus, if we consider the
subadditivity of relative $\left( p\left( .\right) ,\vartheta \right) $%
-capacity and the cases of the exponent $p\left( .\right) $ as $1<p\left(
.\right) \leq 2$ and $p\left( .\right) >2,$ then we get%
\begin{eqnarray*}
&&\dint\limits_{0}^{1}\left( \frac{cap_{p\left( .\right) ,\vartheta }\left(
B\left( x,r\right) -\tbigcap\limits_{i=1}^{n}U_{i},B\left( x,2r\right)
\right) }{cap_{p\left( .\right) ,\vartheta }\left( B\left( x,r\right)
,B\left( x,2r\right) \right) }\right) ^{\frac{1}{p\left( x\right) -1}}\frac{%
dr}{r} \\
&\leq &\dint\limits_{0}^{1}\left( \tsum\limits_{i=1}^{n}\frac{cap_{p\left(
.\right) ,\vartheta }\left( B\left( x,r\right) -U_{i},B\left( x,2r\right)
\right) }{cap_{p\left( .\right) ,\vartheta }\left( B\left( x,r\right)
,B\left( x,2r\right) \right) }\right) ^{\frac{1}{p\left( x\right) -1}}\frac{%
dr}{r} \\
&\leq &C\tsum\limits_{i=1}^{n}\dint\limits_{0}^{1}\left( \frac{cap_{p\left(
.\right) ,\vartheta }\left( B\left( x,r\right) -U_{i},B\left( x,2r\right)
\right) }{cap_{p\left( .\right) ,\vartheta }\left( B\left( x,r\right)
,B\left( x,2r\right) \right) }\right) ^{\frac{1}{p\left( x\right) -1}}\frac{%
dr}{r}<\infty
\end{eqnarray*}%
where $C>0$ depends on $n,p^{-},p^{+}.$ Therefore $\tbigcap%
\limits_{i=1}^{n}U_{i}$ is $\left( p\left( .\right) ,\vartheta \right) $%
-finely open. Finally, we need to show that arbitrary unions of $\left(
p\left( .\right) ,\vartheta \right) $-finely open sets are $\left( p\left(
.\right) ,\vartheta \right) $-finely open. Let $x\in \tbigcup\limits_{i\in
I}U_{i}$ where $U_{i},i\in I,$ are $\left( p\left( .\right) ,\vartheta
\right) $-finely open sets, and $I$ is an index set. Thus, for every $i\in
I, $ we have%
\begin{equation}
\dint\limits_{0}^{1}\left( \frac{cap_{p\left( .\right) ,\vartheta }\left(
B\left( x,r\right) -U_{i},B\left( x,2r\right) \right) }{cap_{p\left(
.\right) ,\vartheta }\left( B\left( x,r\right) ,B\left( x,2r\right) \right) }%
\right) ^{\frac{1}{p\left( x\right) -1}}\frac{dr}{r}<\infty .
\label{finelyopen}
\end{equation}%
Moreover, it is clear that $B\left( x,r\right) -\tbigcup\limits_{i\in
I}U_{i}\subset B\left( x,r\right) -U_{j}$ or equivalently $%
\tbigcap\limits_{i\in I}\left( B\left( x,r\right) -U_{i}\right) \subset
B\left( x,r\right) -U_{j}$ for $j\in I.$ If we consider the properties of
relative $\left( p\left( .\right) ,\vartheta \right) $-capacity and (\ref%
{finelyopen}), then we get%
\begin{eqnarray*}
&&\dint\limits_{0}^{1}\left( \frac{cap_{p\left( .\right) ,\vartheta }\left(
B\left( x,r\right) -\tbigcup\limits_{i\in I}U_{i},B\left( x,2r\right)
\right) }{cap_{p\left( .\right) ,\vartheta }\left( B\left( x,r\right)
,B\left( x,2r\right) \right) }\right) ^{\frac{1}{p\left( x\right) -1}}\frac{%
dr}{r} \\
&=&\dint\limits_{0}^{1}\left( \frac{cap_{p\left( .\right) ,\vartheta }\left(
\tbigcap\limits_{i\in I}\left( B\left( x,r\right) -U_{i}\right) ,B\left(
x,2r\right) \right) }{cap_{p\left( .\right) ,\vartheta }\left( B\left(
x,r\right) ,B\left( x,2r\right) \right) }\right) ^{\frac{1}{p\left( x\right)
-1}}\frac{dr}{r} \\
&\leq &\dint\limits_{0}^{1}\left( \frac{cap_{p\left( .\right) ,\vartheta
}\left( B\left( x,r\right) -U_{j},B\left( x,2r\right) \right) }{cap_{p\left(
.\right) ,\vartheta }\left( B\left( x,r\right) ,B\left( x,2r\right) \right) }%
\right) ^{\frac{1}{p\left( x\right) -1}}\frac{dr}{r}<\infty .
\end{eqnarray*}%
Therefore, $%
\mathbb{R}
^{d}-\tbigcup\limits_{i\in I}U_{i}$ is $\left( p\left( .\right) ,\vartheta
\right) $-thin at $x$ and as $x\in \tbigcup\limits_{i\in I}U_{i}$ was
arbitrary, $\tbigcup\limits_{i\in I}U_{i}$ is $\left( p\left( .\right)
,\vartheta \right) $-finely open.
\end{proof}

\begin{corollary}
\label{tersi}Every open set is $\left( p\left( .\right) ,\vartheta \right) $%
-finely open.
\end{corollary}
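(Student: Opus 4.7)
The plan is to verify the definition of $(p(.),\vartheta)$-thinness directly: given an open set $U \subset \mathbb{R}^d$ and any point $x \in U$, I will show that $\mathbb{R}^d - U$ is $(p(.),\vartheta)$-thin at $x$ by bounding the Wiener integral $W_{p(.),\vartheta}(\mathbb{R}^d - U, x)$ explicitly. The idea is that Euclidean openness provides a ball around $x$ inside $U$, which forces the integrand in \eqref{thin} to vanish near $r = 0$, and a trivial monotonicity bound handles the remaining range.

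First I would exploit the Euclidean openness of $U$: pick $r_0 \in (0,1]$ with $B(x, r_0) \subset U$. Then for every $r \in (0, r_0]$ one has $(\mathbb{R}^d - U) \cap B(x,r) = \emptyset$, so property P1 gives
\begin{equation*}
cap_{p(.),\vartheta}\bigl((\mathbb{R}^d - U) \cap B(x,r),\, B(x,2r)\bigr) = 0,
\end{equation*}
which makes the integrand in the Wiener integral identically zero on $(0, r_0]$.

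Second, on the complementary range $r \in [r_0, 1]$, property P2 applied to $(\mathbb{R}^d - U) \cap B(x,r) \subset B(x,r)$ yields
\begin{equation*}
\frac{cap_{p(.),\vartheta}\bigl((\mathbb{R}^d - U) \cap B(x,r),\, B(x,2r)\bigr)}{cap_{p(.),\vartheta}\bigl(B(x,r),\, B(x,2r)\bigr)} \leq 1,
\end{equation*}
and since $p(x) > 1$ this bound is preserved under taking the $1/(p(x)-1)$-th power. Combining the two ranges,
\begin{equation*}
W_{p(.),\vartheta}(\mathbb{R}^d - U, x) \leq \int_{r_0}^{1} \frac{dr}{r} = \ln(1/r_0) < \infty.
\end{equation*}

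Hence $\mathbb{R}^d - U$ is $(p(.),\vartheta)$-thin at $x$, and since $x \in U$ was arbitrary, $U$ is $(p(.),\vartheta)$-finely open. I do not expect a substantive obstacle: the whole argument is essentially a bookkeeping exercise using P1 and P2. The only delicate point is what happens if the denominator $cap_{p(.),\vartheta}(B(x,r), B(x,2r))$ vanishes for some $r$, but the standing convention assigns the integrand value $1$ in that case and, under the running hypotheses on $\vartheta$ (together with Theorem~\ref{fineatif1} where applicable), this degeneracy does not actually occur.
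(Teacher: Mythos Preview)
Your proof is correct and follows essentially the same approach as the paper: both exploit Euclidean openness to find a ball $B(x,r_0)\subset U$, which makes the integrand vanish for small $r$. Your version is in fact more explicit than the paper's---you carefully bound the range $r\in[r_0,1]$ using P2 to get $\ln(1/r_0)$, whereas the paper simply asserts finiteness once the small-$r$ range is handled---but the underlying idea is identical.
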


\begin{proof}
Assume that $A$ is an open set in $%
\mathbb{R}
^{d}.$ For every $x\in A,$ by the definition of openness, there exists a $%
t>0 $ such that $B\left( x,t\right) \subset A.$ It is easy to see that $%
B\left( x,r\right) \subset B\left( x,t\right) \subset A$ for small enough $%
r>0$. This follows that%
\begin{equation*}
\dint\limits_{0}^{1}\left( \frac{cap_{p\left( .\right) ,\vartheta }\left(
B\left( x,r\right) -A,B\left( x,2r\right) \right) }{cap_{p\left( .\right)
,\vartheta }\left( B\left( x,r\right) ,B\left( x,2r\right) \right) }\right)
^{\frac{1}{p\left( x\right) -1}}\frac{dr}{r}<\infty
\end{equation*}%
,that is, $A\subset 
\mathbb{R}
^{d}$ is $\left( p\left( .\right) ,\vartheta \right) $-finely open.
\end{proof}

\begin{remark}
By the similar method in Corollary \ref{tersi}, it can be shown that every
closed set is $\left( p\left( .\right) ,\vartheta \right) $-finely closed
and that finite union of $\left( p\left( .\right) ,\vartheta \right) $%
-finely closed sets is $\left( p\left( .\right) ,\vartheta \right) $-finely
closed again.
\end{remark}

\begin{corollary}
The $\left( p\left( .\right) ,\vartheta \right) $-fine topology generated by
the $\left( p\left( .\right) ,\vartheta \right) $-finely open sets is finer
than Euclidean topology.
\end{corollary}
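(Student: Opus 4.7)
The plan is to reduce the statement to an inclusion of topologies and then to invoke the immediately preceding results. Recall that, by definition, a topology $\tau_{1}$ is finer than a topology $\tau_{2}$ on the same underlying set precisely when $\tau_{2}\subseteq \tau_{1}$. Denoting the $\left( p(.),\vartheta \right) $-fine topology by $\tau_{F}$ and the Euclidean topology on $\mathbb{R}^{d}$ by $\tau_{E}$, it therefore suffices to verify that every Euclidean open set lies in $\tau_{F}$.

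For this step I would simply fix an arbitrary $A\in \tau_{E}$ and apply Corollary \ref{tersi}, which states that every (Euclidean) open set is $\left( p(.),\vartheta \right) $-finely open, i.e., $A\in \tau_{F}$. Since $A\in \tau_{E}$ was arbitrary, the inclusion $\tau_{E}\subseteq \tau_{F}$ follows, and by the convention recalled above this means $\tau_{F}$ is finer than $\tau_{E}$. The fact that $\tau_{F}$ is a genuine topology on $\mathbb{R}^{d}$, which is needed for the comparison to make sense in the first place, is furnished by the preceding theorem.

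In short, the entire content of this corollary is already packaged in Corollary \ref{tersi} together with the topology theorem; once those two are in hand, no capacity-theoretic estimate is required. The only conceivable obstacle is a terminological one, namely fixing the direction of inclusion associated with the word \emph{finer}, so I would state at the outset which inclusion is intended in order to make the one-line argument unambiguous.
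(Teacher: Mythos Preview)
Your proposal is correct and matches the paper's approach: the paper states this corollary without proof, treating it as an immediate consequence of Corollary~\ref{tersi} (every Euclidean open set is $\left(p(.),\vartheta\right)$-finely open) together with the preceding theorem establishing that the finely open sets form a topology. Your write-up simply makes explicit the inclusion $\tau_{E}\subseteq\tau_{F}$ that the paper leaves to the reader.
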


The opposite claim of Corollary \ref{tersi} is not true in general. To see
this, we give the Lebesgue spine%
\begin{equation*}
E=\left\{ \left( x,t\right) \in 
\mathbb{R}
^{2}\times 
\mathbb{R}
:t>0\text{ and }\left\vert x\right\vert <e^{-\frac{1}{t}}\right\}
\end{equation*}%
as a counter example, see \cite[Example 13.4]{Bjorn2011}.

Now, we consider the more general case in sense to Corollary \ref{tersi}.

\begin{theorem}
Assume that $A\subset 
\mathbb{R}
^{d}$ is an open or $\left( p\left( .\right) ,\vartheta \right) $-finely
open set. Moreover, let, the relative $\left( p\left( .\right) ,\vartheta
\right) $-capacity of $E$ is zero. Then $A-E$ is $\left( p\left( .\right)
,\vartheta \right) $-finely open.
\end{theorem}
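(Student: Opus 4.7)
The plan is to verify the definition of $\left(p(.),\vartheta\right)$-fine openness directly: I will show that the complement $\mathbb{R}^{d}-(A-E) = (\mathbb{R}^{d}-A)\cup E$ is $\left(p(.),\vartheta\right)$-thin at every $x\in A-E$. For any such $x$ one has $x\in A$, and the first tool will be that $\mathbb{R}^{d}-A$ is already $\left(p(.),\vartheta\right)$-thin at $x$: when $A$ is finely open this is immediate from the definition of fine openness, and when $A$ is merely open it follows from Corollary \ref{tersi}. Hence the finiteness $W_{p(.),\vartheta}(\mathbb{R}^{d}-A,x)<\infty$ is available from the outset.

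Next I would estimate the integrand of $W_{p(.),\vartheta}((\mathbb{R}^{d}-A)\cup E,x)$ ball by ball. Applying the subadditivity property P7 to the decomposition $B(x,r)\cap((\mathbb{R}^{d}-A)\cup E) = (B(x,r)-A)\cup(B(x,r)\cap E)$ bounds $cap_{p(.),\vartheta}(B(x,r)\cap((\mathbb{R}^{d}-A)\cup E),B(x,2r))$ by the sum $cap_{p(.),\vartheta}(B(x,r)-A,B(x,2r)) + cap_{p(.),\vartheta}(B(x,r)\cap E,B(x,2r))$. The hypothesis that $E$ has zero relative $\left(p(.),\vartheta\right)$-capacity, together with the monotonicity property P2, forces the second summand to vanish for every $r>0$. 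Dividing by $cap_{p(.),\vartheta}(B(x,r),B(x,2r))$, raising to the power $1/(p(x)-1)$ (which preserves the inequality since both sides are nonnegative), and integrating from $0$ to $1$ against $dr/r$ then yields $W_{p(.),\vartheta}((\mathbb{R}^{d}-A)\cup E,x) \leq W_{p(.),\vartheta}(\mathbb{R}^{d}-A,x) < \infty$, which is precisely the thinness required at $x$.

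The only conceptual subtlety, and hence the sole place I anticipate any friction, is the meaning of the phrase ``$E$ has relative $\left(p(.),\vartheta\right)$-capacity zero'', since the relative capacity always depends on an ambient open set. The reading consistent with P2 and with the earlier results of this section is that the capacity vanishes relative to any bounded open set enveloping $E\cap B(x,2r)$, which via P2 delivers $cap_{p(.),\vartheta}(B(x,r)\cap E,B(x,2r))=0$ for every $r>0$. Once this interpretive point is settled the rest of the argument is pure bookkeeping with P2 and P7 and requires no appeal to the quantitative estimates in Theorems \ref{fineatif1} and \ref{fineatif2}, so I do not expect any genuine technical obstacle.
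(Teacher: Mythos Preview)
Your proposal is correct and follows essentially the same approach as the paper: both arguments decompose $B(x,r)-(A-E)=(B(x,r)-A)\cup(B(x,r)\cap E)$, apply subadditivity of the relative capacity, and use the fine openness of $A$ (via Corollary~\ref{tersi} in the open case) to bound the Wiener integral. Your treatment is in fact slightly more explicit than the paper's in pointing out that the $E$-term vanishes identically by monotonicity before passing to the integral, whereas the paper writes the split integral inequality directly and leaves the vanishing of the second summand implicit.
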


\begin{proof}
By the Corollary \ref{tersi}, we can consider that $A\subset 
\mathbb{R}
^{d}$ is an open set. Thus, for all $y\in A,$%
\begin{equation}
\dint\limits_{0}^{1}\left( \frac{cap_{p\left( .\right) ,\vartheta }\left(
B\left( y,r\right) -A,B\left( y,2r\right) \right) }{cap_{p\left( .\right)
,\vartheta }\left( B\left( y,r\right) ,B\left( y,2r\right) \right) }\right)
^{\frac{1}{p\left( y\right) -1}}\frac{dr}{r}<\infty .  \label{finelyopen1}
\end{equation}%
Moreover, if we consider the properties of relative $\left( p\left( .\right)
,\vartheta \right) $-capacity, for all $x\in A-E$ and $r>0$, we have%
\begin{eqnarray}
&&cap_{p\left( .\right) ,\vartheta }\left( B\left( x,r\right) -\left(
A-E\right) ,B\left( x,2r\right) \right)  \notag \\
&=&cap_{p\left( .\right) ,\vartheta }\left( \left( B\left( x,r\right)
-A\right) \cup \left( B\left( x,r\right) \cap E\right) ,B\left( x,2r\right)
\right)  \notag \\
&\leq &cap_{p\left( .\right) ,\vartheta }\left( \left( B\left( x,r\right)
-A\right) ,B\left( x,2r\right) \right)  \notag \\
&&+cap_{p\left( .\right) ,\vartheta }\left( \left( B\left( x,r\right) \cap
E\right) ,B\left( x,2r\right) \right) .  \label{finelyopen2}
\end{eqnarray}%
Using the (\ref{finelyopen1}) and (\ref{finelyopen2}), we get%
\begin{eqnarray*}
&&\dint\limits_{0}^{1}\left( \frac{cap_{p\left( .\right) ,\vartheta }\left(
B\left( x,r\right) -\left( A-E\right) ,B\left( x,2r\right) \right) }{%
cap_{p\left( .\right) ,\vartheta }\left( B\left( x,r\right) ,B\left(
x,2r\right) \right) }\right) ^{\frac{1}{p\left( x\right) -1}}\frac{dr}{r} \\
&\leq &\dint\limits_{0}^{1}\left( \frac{cap_{p\left( .\right) ,\vartheta
}\left( \left( B\left( x,r\right) -A\right) ,B\left( x,2r\right) \right) }{%
cap_{p\left( .\right) ,\vartheta }\left( B\left( x,r\right) ,B\left(
x,2r\right) \right) }\right) ^{\frac{1}{p\left( x\right) -1}}\frac{dr}{r} \\
&&+\dint\limits_{0}^{1}\left( \frac{cap_{p\left( .\right) ,\vartheta }\left(
\left( B\left( x,r\right) \cap E\right) ,B\left( x,2r\right) \right) }{%
cap_{p\left( .\right) ,\vartheta }\left( B\left( x,r\right) ,B\left(
x,2r\right) \right) }\right) ^{\frac{1}{p\left( x\right) -1}}\frac{dr}{r} \\
&<&\infty .
\end{eqnarray*}%
This completes the proof.
\end{proof}

Now, we give that $\left( p\left( .\right) ,\vartheta \right) $-thinness is
a local property.

\begin{theorem}
$A\subset 
\mathbb{R}
^{d}$ is $\left( p\left( .\right) ,\vartheta \right) $-thin at $x\in 
\mathbb{R}
^{d}$ if and only if for any $\delta >0,$ the set $A\cap B\left( x,\delta
\right) $ is $\left( p\left( .\right) ,\vartheta \right) $-thin at $x\in 
\mathbb{R}
^{d}.$
\end{theorem}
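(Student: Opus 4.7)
The plan is to use the facts that the Wiener integral defining $(p(.),\vartheta)$-thinness runs only over $r\in(0,1)$, that at scales $r<\delta$ the sets $A$ and $A\cap B(x,\delta)$ have identical intersection with $B(x,r)$, and that $cap_{p(.),\vartheta}$ is monotone by property P2.

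For the forward direction I would note that $\left(A\cap B(x,\delta)\right)\cap B(x,r)\subset A\cap B(x,r)$ for every $r>0$, so P2 forces the integrand of $W_{p(.),\vartheta}(A\cap B(x,\delta),x)$ to be pointwise dominated by that of $W_{p(.),\vartheta}(A,x)$, giving thinness of $A\cap B(x,\delta)$ at $x$ at once.

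For the converse I would fix any $r_{0}\in\left(0,\min\{\delta,1\}\right)$ and split the Wiener integral as $\int_{0}^{r_{0}}+\int_{r_{0}}^{1}$. For $r<r_{0}$ the inclusion $B(x,r)\subset B(x,\delta)$ yields $A\cap B(x,r)=\left(A\cap B(x,\delta)\right)\cap B(x,r)$, so on $(0,r_{0})$ the integrands for $A$ and for $A\cap B(x,\delta)$ coincide identically, and by hypothesis the first piece is bounded by $W_{p(.),\vartheta}(A\cap B(x,\delta),x)<\infty$. For the tail, applying P2 to $A\cap B(x,r)\subset B(x,r)$ shows the capacity ratio is at most one, so $\int_{r_{0}}^{1}\frac{dr}{r}=\log(1/r_{0})<\infty$ furnishes a uniform bound, and the two estimates combine to give $W_{p(.),\vartheta}(A,x)<\infty$.

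I do not anticipate a serious obstacle; the only small subtlety is the convention that the integrand is set to $1$ when $cap_{p(.),\vartheta}(B(x,r),B(x,2r))=0$, but since this rule applies to both integrands consistently, the coincidence on $(0,r_{0})$ survives and the bound by one on $[r_{0},1]$ is unaffected. The whole argument is essentially a routine localization based on the restricted range of integration together with monotonicity of the relative capacity.
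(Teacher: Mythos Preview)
Your argument is correct. The forward direction via monotonicity (P2) is exactly what the paper does. For the converse, the two arguments diverge slightly: the paper proceeds by contradiction and in effect uses the hypothesis with $\delta\geq 1$, so that $A\cap B(x,\delta)\cap B(x,r)=A\cap B(x,r)$ for every $r\in(0,1)$ and the two Wiener integrals coincide outright; your proof instead fixes an arbitrary $\delta>0$, splits $\int_{0}^{1}=\int_{0}^{r_{0}}+\int_{r_{0}}^{1}$, matches the integrands on $(0,r_{0})$, and bounds the tail by $\log(1/r_{0})$ using that the capacity ratio never exceeds $1$. Your route is a bit more explicit and has the mild bonus that it shows the converse already follows from thinness of $A\cap B(x,\delta)$ at $x$ for a \emph{single} $\delta>0$, whereas the paper's presentation leans on the full ``for every $\delta$'' hypothesis. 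The handling of the $cap_{p(.),\vartheta}(B(x,r),B(x,2r))=0$ convention that you flag is indeed harmless for both arguments.
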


\begin{proof}
Let $A\subset 
\mathbb{R}
^{d}$ and $x\in 
\mathbb{R}
^{d}.$ Assume that $A\subset 
\mathbb{R}
^{d}$ is $\left( p\left( .\right) ,\vartheta \right) $-thin at $x\in 
\mathbb{R}
^{d}.$ This follows that%
\begin{equation*}
\dint\limits_{0}^{1}\left( \frac{cap_{p\left( .\right) ,\vartheta }\left(
A\cap B\left( x,r\right) ,B\left( x,2r\right) \right) }{cap_{p\left(
.\right) ,\vartheta }\left( B\left( x,r\right) ,B\left( x,2r\right) \right) }%
\right) ^{\frac{1}{p\left( x\right) -1}}\frac{dr}{r}<\infty .
\end{equation*}%
By the monotonicity of relative $\left( p\left( .\right) ,\vartheta \right) $%
-capacity, we have%
\begin{equation}
\dint\limits_{0}^{1}\left( \frac{cap_{p\left( .\right) ,\vartheta }\left(
A\cap B\left( x,\delta \right) \cap B\left( x,r\right) ,B\left( x,2r\right)
\right) }{cap_{p\left( .\right) ,\vartheta }\left( B\left( x,r\right)
,B\left( x,2r\right) \right) }\right) ^{\frac{1}{p\left( x\right) -1}}\frac{%
dr}{r}<\infty  \label{thick}
\end{equation}%
for any $\delta >0$. This completes the necessary condition part of the
proof. Now, we assume that for any $\delta >0,$ the set $A\cap B\left(
x,\delta \right) $ is $\left( p\left( .\right) ,\vartheta \right) $-thin at $%
x\in 
\mathbb{R}
^{d}.$ Thus (\ref{thick}) is satisfied for all $\delta >0,$ in particular,
for $0<r\leq \delta \leq 1.$ Let $A$ is $\left( p\left( .\right) ,\vartheta
\right) $-thick at $x\in 
\mathbb{R}
^{d}.$ Then we have%
\begin{eqnarray*}
&&\dint\limits_{0}^{1}\left( \frac{cap_{p\left( .\right) ,\vartheta }\left(
A\cap B\left( x,\delta \right) \cap B\left( x,r\right) ,B\left( x,2r\right)
\right) }{cap_{p\left( .\right) ,\vartheta }\left( B\left( x,r\right)
,B\left( x,2r\right) \right) }\right) ^{\frac{1}{p\left( x\right) -1}}\frac{%
dr}{r} \\
&=&\dint\limits_{0}^{1}\left( \frac{cap_{p\left( .\right) ,\vartheta }\left(
A\cap B\left( x,r\right) ,B\left( x,2r\right) \right) }{cap_{p\left(
.\right) ,\vartheta }\left( B\left( x,r\right) ,B\left( x,2r\right) \right) }%
\right) ^{\frac{1}{p\left( x\right) -1}}\frac{dr}{r}=\infty .
\end{eqnarray*}%
This is a contradiction. That is the desired result.
\end{proof}

\begin{theorem}
Let the hypotheses of Theorem \ref{finedenklik} hold. Moreover, assume that
there is a point $x\in A$ such that $%
\mathbb{R}
^{d}-A$ is $\left( p\left( .\right) ,\vartheta \right) $-thin at $x.$ Then
there exist a point $x\in A$ and $s>0$ such that%
\begin{equation*}
cap_{p\left( .\right) ,\vartheta }\left( B\left( x,s\right) -A,B\left(
x,2s\right) \right) <cap_{p\left( .\right) ,\vartheta }\left( B\left(
x,s\right) ,B\left( x,2s\right) \right) .
\end{equation*}
\end{theorem}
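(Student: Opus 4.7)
The plan is to translate the given Wiener integral condition into a Wiener sum condition via Theorem \ref{finedenklik}, and then exploit the fact that the general term of a convergent series of non-negative reals must tend to zero. By hypothesis the assumptions of Theorem \ref{finedenklik} hold, and the point $x\in A$ from the hypothesis satisfies $x\notin \mathbb{R}^{d}-A$, so applying that theorem with $E=\mathbb{R}^{d}-A$ gives
\[
W_{p(.),\vartheta}^{sum}\left( \mathbb{R}^{d}-A,x\right) \leq C_{2}\,W_{p(.),\vartheta}\left( \mathbb{R}^{d}-A,x\right) <\infty ,
\]
where the second inequality is precisely the definition of $(p(.),\vartheta)$-thinness of $\mathbb{R}^{d}-A$ at $x$.

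Because the series $W_{p(.),\vartheta}^{sum}(\mathbb{R}^{d}-A,x)$ has non-negative terms and converges, its general term tends to $0$. Using the set identity $(\mathbb{R}^{d}-A)\cap B(x,2^{-i})=B(x,2^{-i})-A$, this reads
\[
\lim_{i\to \infty }\left( \frac{cap_{p(.),\vartheta}\left( B(x,2^{-i})-A,B(x,2^{1-i})\right) }{cap_{p(.),\vartheta}\left( B(x,2^{-i}),B(x,2^{1-i})\right) }\right) ^{\frac{1}{p(x)-1}}=0.
\]
Since the exponent $1/(p(x)-1)$ is strictly positive, the inner ratio itself tends to $0$. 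Hence there exists $i_{0}\in \mathbb{N}$ such that for every $i\geq i_{0}$ this ratio is strictly less than $1$. Picking any such $i$ and setting $s=2^{-i}>0$, so that $2s=2^{1-i}$, yields the required strict inequality with the same point $x\in A$ furnished by the hypothesis.

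I do not anticipate any real obstacle: the whole argument is packaged by Theorem \ref{finedenklik}. The one technical point worth flagging is that the ratio must make sense, i.e.\ $cap_{p(.),\vartheta}(B(x,2^{-i}),B(x,2^{1-i}))>0$; this is guaranteed by the standing hypothesis of Theorem \ref{fineatif1} ($cap_{p(.),\vartheta}(B(x_{0},r),B(x_{0},2r))\geq 1$), which is included in the assumption that the hypotheses of Theorem \ref{finedenklik} hold. Verifying that the same $x$ from the hypothesis serves in the conclusion is immediate from the above construction.
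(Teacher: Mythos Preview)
Your proof is correct and follows essentially the same approach as the paper's: convert the Wiener integral to the Wiener sum via Theorem \ref{finedenklik}, observe that convergence forces the general term to tend to zero, and extract a suitable $s=2^{-i}$. The only cosmetic difference is that the paper states $\liminf_{i\to\infty}$ of the term is zero, whereas you (correctly) note that for a convergent series of non-negative terms the full limit is zero; your version is slightly sharper but the conclusion is the same.
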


\begin{proof}
Since $%
\mathbb{R}
^{d}-A$ is $\left( p\left( .\right) ,\vartheta \right) $-thin at $x\in A$,
by the Theorem \ref{finedenklik}, we have%
\begin{eqnarray*}
W_{p\left( .\right) ,\vartheta }^{sum}\left( 
\mathbb{R}
^{d}-A,x\right) &=&\dsum\limits_{i=0}^{\infty }\left( \frac{cap_{p\left(
.\right) ,\vartheta }\left( \left( 
\mathbb{R}
^{d}-A\right) \cap B\left( x,2^{-i}\right) ,B\left( x,2^{1-i}\right) \right) 
}{cap_{p\left( .\right) ,\vartheta }\left( B\left( x,2^{-i}\right) ,B\left(
x,2^{1-i}\right) \right) }\right) ^{\frac{1}{p\left( x\right) -1}} \\
&=&\dsum\limits_{i=0}^{\infty }\left( \frac{cap_{p\left( .\right) ,\vartheta
}\left( B\left( x,2^{-i}\right) -A,B\left( x,2^{1-i}\right) \right) }{%
cap_{p\left( .\right) ,\vartheta }\left( B\left( x,2^{-i}\right) ,B\left(
x,2^{1-i}\right) \right) }\right) ^{\frac{1}{p\left( x\right) -1}}<\infty .
\end{eqnarray*}%
This follows that%
\begin{equation*}
\liminf_{i\longrightarrow \infty }\left( \frac{cap_{p\left( .\right)
,\vartheta }\left( B\left( x,2^{-i}\right) -A,B\left( x,2^{1-i}\right)
\right) }{cap_{p\left( .\right) ,\vartheta }\left( B\left( x,2^{-i}\right)
,B\left( x,2^{1-i}\right) \right) }\right) ^{\frac{1}{p\left( x\right) -1}%
}=0.
\end{equation*}%
By the definition of limit, we get the desired result.
\end{proof}

\begin{remark}
The proof of the previous theorem can be considered by using Wiener integral 
$W_{p\left( .\right) ,\vartheta }\left( A,x\right) $ with similar method.
Here, there is not necessary the condition that the hypotheses of Theorem %
\ref{finedenklik} are hold.
\end{remark}

\begin{definition}
Let $U$ be a $\left( p\left( .\right) ,\vartheta \right) $-finely open set.
A function $f:U\longrightarrow 
\mathbb{R}
$ is $\left( p\left( .\right) ,\vartheta \right) $-finely continuous at $%
x_{0}\in U$ if $\left\{ x\in U:\left\vert f\left( x\right) -f\left(
x_{0}\right) \right\vert \geq \varepsilon \right\} $ is $\left( p\left(
.\right) ,\vartheta \right) $-thin at $x_{0}$ for each $\varepsilon >0.$
\end{definition}

\begin{remark}
Assume that $U$ is a $\left( p\left( .\right) ,\vartheta \right) $-finely
open set and $f:U\longrightarrow 
\mathbb{R}
$ is $\left( p\left( .\right) ,\vartheta \right) $-finely continuous at $%
x_{0}\in U.$ Then $f$ is continuous function with respect to the $\left(
p\left( .\right) ,\vartheta \right) $-fine topology on $U.$ Indeed, if we
consider the definition of finely continuous, then the set $\left\{ x\in
U:\left\vert f\left( x\right) -f\left( x_{0}\right) \right\vert <\varepsilon
\right\} $ is $\left( p\left( .\right) ,\vartheta \right) $-finely open.
This follows that $f:U\longrightarrow 
\mathbb{R}
$ is continuous at $x_{0}\in U$ in sense to the $\left( p\left( .\right)
,\vartheta \right) $-fine topology on $U$. The converse argument is still an
open problem, see \cite{HarLat}. Moreover, this argument for the constant
exponent was considered by \cite[Teorem 2.136]{Mal}.
\end{remark}

It is note that a set $A$ is a $\left( p\left( .\right) ,\vartheta \right) $%
-fine neighbourhood of a point $x$ if and only if $x\in A$ and $%
\mathbb{R}
^{d}-A$ \ is $\left( p\left( .\right) ,\vartheta \right) $-thin at $x,$ see 
\cite{Hei}.

\begin{theorem}
Let $A\subset B\left( x_{0},r\right) ,$ $cap_{p\left( .\right) ,\vartheta
}\left( A,B\left( x_{0},4r\right) \right) \geq 1$ and $0<r\leq s\leq 2r.$
Assume that $A\subset 
\mathbb{R}
^{d}$ is $\left( p\left( .\right) ,\vartheta \right) $-thin at $x\in 
\overline{A}-A$. Then there exists an open neighbourhood $U$ of $A$ such
that $U$ is $\left( p\left( .\right) ,\vartheta \right) $-thin at $x$ and $%
x\notin U.$
\end{theorem}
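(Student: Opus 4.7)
The plan is to thicken $A$ at each dyadic scale using outer regularity of the relative capacity (property P3), and take the union to form $U$. Write $B_i = B(x, 2^{-i})$ for $i \in \mathbb{Z}$. Since $x \notin A$, the set $A$ decomposes as $A = F \cup \bigcup_{i \geq 0} G_i$, where $F = A \setminus B_0$ is bounded away from $x$ and each $G_i = A \cap (B_i \setminus B_{i+1})$ lies in a dyadic annulus. For $F$, pick an open set $W_0 \supset F$ with $\overline{W_0} \not\ni x$. For each $i \geq 0$, the piece $G_i$ lies in the open annular set $B_{i-1} \setminus \overline{B_{i+2}}$, which does not contain $x$; applying P3 inside this annulus, select an open set $V_i$ satisfying
\[
G_i \subset V_i \subset B_{i-1} \setminus \overline{B_{i+2}}
\]
and
\[
cap_{p(.),\vartheta}(V_i, B_{i-1}) \leq cap_{p(.),\vartheta}(G_i, B_{i-1}) + \varepsilon_i \, cap_{p(.),\vartheta}(B_i, B_{i-1}),
\]
where $\{\varepsilon_i\}$ is a rapidly decreasing sequence chosen so that $\sum_i \varepsilon_i^{1/(p(x)-1)} < \infty$.

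Set $U = W_0 \cup \bigcup_{i \geq 0} V_i$. Openness is clear, $A \subset U$ follows from $G_i \subset V_i$ and $F \subset W_0$, and $x \notin U$ since every $V_i$ avoids $\overline{B_{i+2}} \ni x$ and $W_0$ avoids $x$ by construction.

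The main task is to verify that $U$ is $\left( p(.),\vartheta \right)$-thin at $x$. By Theorem \ref{finedenklik}, it is enough to show that the Wiener sum $W_{p(.),\vartheta}^{sum}(U,x)$ is finite. For each $k \geq 1$, analyze $U \cap B_k$ using subadditivity (P7). By the annular confinement of the $V_i$, one has $V_i \cap B_k = \emptyset$ whenever $i \leq k-2$, and $V_i \cap B_k = V_i$ whenever $i \geq k+1$ (since then $V_i \subset B_{i-1} \subset B_k$). For the boundary indices $i = k-1, k$, apply Theorem \ref{fineatif2} together with monotonicity (P2) to bound $cap_{p(.),\vartheta}(V_{k-1} \cap B_k, B_{k-1})$ by a constant multiple of $cap_{p(.),\vartheta}(V_{k-1}, B_{k-2})$ and to control the $i=k$ term. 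Collecting these estimates yields
\[
cap_{p(.),\vartheta}(U \cap B_k, B_{k-1}) \leq C \Bigl[ cap_{p(.),\vartheta}(V_{k-1}, B_{k-2}) + cap_{p(.),\vartheta}(V_k, B_{k-1}) + \sum_{i \geq k+1} cap_{p(.),\vartheta}(V_i, B_{i-1}) \Bigr].
\]

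The main obstacle is the summation step: after dividing by $cap_{p(.),\vartheta}(B_k, B_{k-1})^{1/(p(x)-1)}$, raising to the power $1/(p(x)-1)$, and summing over $k$, the tail forces one to exchange the order of summation and use Theorem \ref{fineatif1} to replace capacities of dyadic balls by the weighted measures $\mu_\vartheta(B_i)$, $\mu_\vartheta(B_k)$; the doubling property of $\mu_\vartheta$ then compares scales. The split into the two regimes $p(x) \geq 2$ (where $1/(p(x)-1) \leq 1$ and $t \mapsto t^{1/(p(x)-1)}$ is concave, so the power distributes over sums) versus $1 < p(x) < 2$ (where the analogous inequality carries an extra constant of the form $2^{(2-p(x))/(p(x)-1)}$ depending only on $p^-, p^+$) — exactly as in the proof of the fine topology theorem earlier in this section — is the delicate point. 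The finite Wiener sum $W_{p(.),\vartheta}^{sum}(A,x)$ together with the convergent series $\sum_i \varepsilon_i^{1/(p(x)-1)}$ then controls $W_{p(.),\vartheta}^{sum}(U,x)$, and Theorem \ref{finedenklik} upgrades this to $W_{p(.),\vartheta}(U,x) < \infty$, so $U$ is thin at $x$.
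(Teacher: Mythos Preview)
Your annular decomposition is a different route from the paper's. The paper chooses, for each $i$, an open $U_i\supset A\cap\overline{B(x,2^{-i})}$ (a nested ball, not an annular slice) whose capacity in $B(x,2^{1-i})$ is close to that of $A\cap\overline{B_i}$, and then sets
\[
U=(\mathbb{R}^d\setminus\overline{B_1})\cup(U_1\setminus\overline{B_2})\cup\bigl((U_1\cap U_2)\setminus\overline{B_3}\bigr)\cup\cdots .
\]
The intersections are the whole point: they force the single containment $U\cap B_k\subset U_k$ for every $k$, so the $k$-th Wiener-sum term for $U$ is bounded directly by one term for $A$ (shifted one dyadic scale via Theorems~\ref{fineatif1} and~\ref{fineatif2}) plus $2^{-k}$. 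No tail ever appears.

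In your construction, by contrast, $U\cap B_k$ contains the infinite union $\bigcup_{i\geq k+1}V_i$, and this is where the sketch breaks. When $1<p(x)<2$ the exponent $\alpha=1/(p(x)-1)$ exceeds $1$, and the inequality you invoke---``the analogous inequality carries an extra constant of the form $2^{(2-p(x))/(p(x)-1)}$''---is the two-term bound $(a+b)^\alpha\leq 2^{\alpha-1}(a^\alpha+b^\alpha)$; for a sum of $n$ terms one only has $(\sum_{i=1}^n c_i)^\alpha\leq n^{\alpha-1}\sum c_i^\alpha$, and the constant blows up with $n$. The appeal to the fine-topology theorem earlier in the section does not help, since there the sum has a fixed finite number of terms. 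Your ``exchange of summation plus doubling'' step also does not close as written: doubling gives only $\mu_\vartheta(B_{k-1})\leq c_d\,\mu_\vartheta(B_k)$, which does not make $\sum_{k\leq i}\mu_\vartheta(B_k)^{-\alpha}$ comparable to $\mu_\vartheta(B_i)^{-\alpha}$.

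The annular strategy can be salvaged, but it requires two ingredients you do not mention: reverse doubling of $\mu_\vartheta$ (automatic for doubling measures on $\mathbb{R}^d$, yielding $\mu_\vartheta(B_i)/\mu_\vartheta(B_k)\leq\rho^{\,i-k}$ for some $\rho<1$ when $i\geq k$) together with a discrete Young-type convolution bound $\sum_k\bigl(\sum_{i\geq k}b_i\rho^{\,i-k}\bigr)^\alpha\leq C\sum_i b_i^\alpha$, which handles both ranges of $p(x)$ at once. The paper's nested construction sidesteps all of this by never producing a tail in the first place.
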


\begin{proof}
Using the same methods in the Theorem \ref{fineatif2} and Theorem \ref%
{fineatif1}, it can be found for $r\leq s\leq 2r$ that%
\begin{equation}
cap_{p\left( .\right) ,\vartheta }\left( A\cap B\left( x,r\right) ,B\left(
x,2r\right) \right) \approx cap_{p\left( .\right) ,\vartheta }\left( A\cap
B\left( x,r\right) ,B\left( x,2s\right) \right)  \label{neigh1}
\end{equation}%
and%
\begin{equation}
cap_{p\left( .\right) ,\vartheta }\left( B\left( x,r\right) ,B\left(
x,2r\right) \right) \approx cap_{p\left( .\right) ,\vartheta }\left( B\left(
x,s\right) ,B\left( x,2s\right) \right)  \label{neigh2}
\end{equation}%
where the constants in $\approx $ depend on $r$, $p^{-}$, $p^{+}$, constants
of doubling measure and Poincar\'{e} inequality, see \cite{Unal}. If we
consider the Theorem \ref{fineatif2} and the monotonicity of relative $%
\left( p\left( .\right) ,\vartheta \right) $-capacity, then we have%
\begin{eqnarray*}
cap_{p\left( .\right) ,\vartheta }\left( A\cap \overline{B\left(
x,2^{-i}\right) },B\left( x,2^{1-i}\right) \right) &\leq &Tcap_{p\left(
.\right) ,\vartheta }\left( A\cap \overline{B\left( x,2^{-i}\right) }%
,B\left( x,2^{2-i}\right) \right) \\
&\leq &Tcap_{p\left( .\right) ,\vartheta }\left( A\cap B\left(
x,2^{1-i}\right) ,B\left( x,2^{2-i}\right) \right) .
\end{eqnarray*}%
By the definition of relative $\left( p\left( .\right) ,\vartheta \right) $%
-capacity, it can be taken open sets $U_{i}\supset A\cap \overline{B\left(
x,2^{-i}\right) }$ such that%
\begin{eqnarray}
&&\left( \frac{cap_{p\left( .\right) ,\vartheta }\left( U_{i},B\left(
x,2^{1-i}\right) \right) }{cap_{p\left( .\right) ,\vartheta }\left( B\left(
x,2^{-i}\right) ,B\left( x,2^{1-i}\right) \right) }\right) ^{\frac{1}{%
p\left( x\right) -1}}  \notag \\
&\leq &\left( \frac{cap_{p\left( .\right) ,\vartheta }\left( A\cap \overline{%
B\left( x,2^{-i}\right) },B\left( x,2^{1-i}\right) \right) }{cap_{p\left(
.\right) ,\vartheta }\left( B\left( x,2^{-i}\right) ,B\left(
x,2^{1-i}\right) \right) }\right) ^{\frac{1}{p\left( x\right) -1}}+\frac{1}{%
2^{i}}.  \label{neigh3}
\end{eqnarray}%
Denote%
\begin{equation*}
U=\left( 
\mathbb{R}
^{d}-\overline{B_{1}}\right) \cup \left( U_{1}-\overline{B_{2}}\right) \cup
\left( \left( U_{1}\cap U_{2}\right) -\overline{B_{3}}\right) \cup \left(
\left( U_{1}\cap U_{2}\cap U_{3}\right) -\overline{B_{4}}\right) \cup ...
\end{equation*}%
where $B_{i}=B\left( x,2^{-i}\right) .$ It is easy to see that $U$ is open, $%
A\subset U$ holds and $x\notin U.$ Since $B\left( x,2^{-i}\right) \subset 
\overline{B\left( x,2^{-i}\right) }$ holds for $i\in 
\mathbb{N}
,$ it is clear that $U\cap B\left( x,2^{-i}\right) \subset U_{i}$. By (\ref%
{neigh3}), we have%
\begin{eqnarray*}
&&\left( \frac{cap_{p\left( .\right) ,\vartheta }\left( U\cap B\left(
x,2^{-i}\right) ,B\left( x,2^{1-i}\right) \right) }{cap_{p\left( .\right)
,\vartheta }\left( B\left( x,2^{-i}\right) ,B\left( x,2^{1-i}\right) \right) 
}\right) ^{\frac{1}{p\left( x\right) -1}} \\
&\leq &\left( \frac{cap_{p\left( .\right) ,\vartheta }\left( U_{i},B\left(
x,2^{1-i}\right) \right) }{cap_{p\left( .\right) ,\vartheta }\left( B\left(
x,2^{-i}\right) ,B\left( x,2^{1-i}\right) \right) }\right) ^{\frac{1}{%
p\left( x\right) -1}} \\
&\leq &\left( \frac{cap_{p\left( .\right) ,\vartheta }\left( A\cap \overline{%
B\left( x,2^{-i}\right) },B\left( x,2^{1-i}\right) \right) }{cap_{p\left(
.\right) ,\vartheta }\left( B\left( x,2^{-i}\right) ,B\left(
x,2^{1-i}\right) \right) }\right) ^{\frac{1}{p\left( x\right) -1}}+\frac{1}{%
2^{i}}.
\end{eqnarray*}%
Moreover, if we consider (\ref{neigh1}), then we get%
\begin{eqnarray}
&&cap_{p\left( .\right) ,\vartheta }\left( A\cap \overline{B\left(
x,2^{-i}\right) },B\left( x,2^{1-i}\right) \right)  \notag \\
&\leq &C_{1}cap_{p\left( .\right) ,\vartheta }\left( A\cap \overline{B\left(
x,2^{-i}\right) },B\left( x,2^{2-i}\right) \right)  \notag \\
&\leq &C_{2}cap_{p\left( .\right) ,\vartheta }\left( A\cap B\left(
x,2^{1-i}\right) ,B\left( x,2^{2-i}\right) \right) .  \label{neigh5}
\end{eqnarray}

By (\ref{neigh2}), the inequality%
\begin{eqnarray}
&&cap_{p\left( .\right) ,\vartheta }\left( B\left( x,2^{-i}\right) ,B\left(
x,2^{1-i}\right) \right)  \notag \\
&\geq &\frac{1}{C_{2}}cap_{p\left( .\right) ,\vartheta }\left( B\left(
x,2^{1-i}\right) ,B\left( x,2^{2-i}\right) \right)  \label{neigh6}
\end{eqnarray}%
holds. If we combine (\ref{neigh5}) and (\ref{neigh6}), then we have%
\begin{eqnarray*}
&&\dsum\limits_{i=1}^{\infty }\left( \frac{cap_{p\left( .\right) ,\vartheta
}\left( U\cap B\left( x,2^{-i}\right) ,B\left( x,2^{1-i}\right) \right) }{%
cap_{p\left( .\right) ,\vartheta }\left( B\left( x,2^{-i}\right) ,B\left(
x,2^{1-i}\right) \right) }\right) ^{\frac{1}{p\left( x\right) -1}} \\
&\leq &C\dsum\limits_{i=1}^{\infty }\left( \frac{cap_{p\left( .\right)
,\vartheta }\left( A\cap B\left( x,2^{1-i}\right) ,B\left( x,2^{2-i}\right)
\right) }{cap_{p\left( .\right) ,\vartheta }\left( B\left( x,2^{1-i}\right)
,B\left( x,2^{2-i}\right) \right) }\right) ^{\frac{1}{p\left( x\right) -1}%
}+1.
\end{eqnarray*}%
Since $A$ is $\left( p\left( .\right) ,\vartheta \right) $-thin at $x,$ by
considering the definition of Wiener sum $W_{p\left( .\right) ,\vartheta
}^{sum},$ we conclude%
\begin{equation*}
\dsum\limits_{i=1}^{\infty }\left( \frac{cap_{p\left( .\right) ,\vartheta
}\left( U\cap B\left( x,2^{-i}\right) ,B\left( x,2^{1-i}\right) \right) }{%
cap_{p\left( .\right) ,\vartheta }\left( B\left( x,2^{-i}\right) ,B\left(
x,2^{1-i}\right) \right) }\right) ^{\frac{1}{p\left( x\right) -1}}<\infty .
\end{equation*}%
This follows that%
\begin{eqnarray*}
&&W_{p\left( .\right) ,\vartheta }^{sum}\left( U,x\right) \\
&=&\dsum\limits_{i=0}^{\infty }\left( \frac{cap_{p\left( .\right) ,\vartheta
}\left( U\cap B\left( x,2^{-i}\right) ,B\left( x,2^{1-i}\right) \right) }{%
cap_{p\left( .\right) ,\vartheta }\left( B\left( x,2^{-i}\right) ,B\left(
x,2^{1-i}\right) \right) }\right) ^{\frac{1}{p\left( x\right) -1}} \\
&=&\left. \left( \frac{cap_{p\left( .\right) ,\vartheta }\left( U\cap
B\left( x,1\right) ,B\left( x,2\right) \right) }{cap_{p\left( .\right)
,\vartheta }\left( B\left( x,1\right) ,B\left( x,2\right) \right) }\right) ^{%
\frac{1}{p\left( x\right) -1}}\right. \\
&&\left. +\dsum\limits_{i=1}^{\infty }\left( \frac{cap_{p\left( .\right)
,\vartheta }\left( U\cap B\left( x,2^{-i}\right) ,B\left( x,2^{1-i}\right)
\right) }{cap_{p\left( .\right) ,\vartheta }\left( B\left( x,2^{-i}\right)
,B\left( x,2^{1-i}\right) \right) }\right) ^{\frac{1}{p\left( x\right) -1}%
}<\infty .\right.
\end{eqnarray*}%
Hence $U$ is $\left( p\left( .\right) ,\vartheta \right) $-thin at $x.$ Thus
the claim is follows from definition of open neighbourhood.
\end{proof}

Now, we consider the usage area of $\left( p\left( .\right) ,\vartheta
\right) $-fine topology in potential theory. We define $\left( p\left(
.\right) ,\vartheta \right) $-Laplace equation as%
\begin{equation}
-\Delta _{p\left( .\right) ,\vartheta }=-\func{div}\left( \vartheta \left(
x\right) \left\vert \nabla f\right\vert ^{p\left( .\right) -2}\nabla
f\right) =0  \label{solution}
\end{equation}%
for every $f\in W_{0,\vartheta }^{1,p(.)}\left( \Omega \right) .$

\begin{definition}
(\cite{Unal2}) Let $\Omega \subset 
\mathbb{R}
^{d}$ for $d\geq 2,$ be an open set. A function $f\in W_{\vartheta
,loc}^{1,p(.)}(\Omega )$ called a (weak) weighted solution (briefly $\left(
p\left( .\right) ,\vartheta \right) $-solution) of (\ref{solution}) in $%
\Omega $, if%
\begin{equation*}
\dint\limits_{\Omega }\left\vert \nabla f\left( x\right) \right\vert
^{p\left( x\right) -2}\nabla f\left( x\right) \cdot \nabla g\left( x\right)
\vartheta \left( x\right) dx=0
\end{equation*}%
whenever $g\in C_{0}^{\infty }\left( \Omega \right) .$ Moreover, a function $%
f\in W_{\vartheta ,loc}^{1,p(.)}(\Omega )$ is a (weak) weighted
supersolution (briefly $\left( p\left( .\right) ,\vartheta \right) $%
-supersolution) of (\ref{solution}) in $\Omega $, if%
\begin{equation}
\dint\limits_{\Omega }\left\vert \nabla f\left( x\right) \right\vert
^{p\left( x\right) -2}\nabla f\left( x\right) \cdot \nabla g\left( x\right)
\vartheta \left( x\right) dx\geq 0  \label{supersolution}
\end{equation}%
whenever $g\in C_{0}^{\infty }\left( \Omega \right) $ is nonnegative. A
function $f$ is a weighted subsolution in $\Omega $ if $-f$ is a $\left(
p\left( .\right) ,\vartheta \right) $-supersolution in $\Omega $, and a
weighted solution in $\Omega $.
\end{definition}

\begin{definition}
(\cite{HarLat}, \cite{Hei}) A function $f:\Omega \longrightarrow \left(
-\infty ,\infty \right] $ is $\left( p\left( .\right) ,\vartheta \right) $%
-superharmonic in $\Omega $ if

\begin{enumerate}
\item[\textit{(i)}] f is lower semicontinuous,

\item[\textit{(ii)}] f is finite almost everywhere,

\item[\textit{(iii)}] Assume that $D\Subset \Omega $ is an open set. If $g$
is a $\left( p\left( .\right) ,\vartheta \right) $-solution in $D,$ which is
continuous in $\overline{D},$ and satisfies $f\geq g$ on $\partial D,$ then $%
f\geq g$ in $D.$
\end{enumerate}
\end{definition}

Note that every $\left( p\left( .\right) ,\vartheta \right) $-supersolution
in $\Omega $, which satisfies%
\begin{equation*}
f\left( x\right) =\text{ess}\liminf_{y\longrightarrow x}f\left( y\right)
\end{equation*}%
for all $x\in \Omega ,$ is $\left( p\left( .\right) ,\vartheta \right) $%
-superharmonic in $\Omega .$ On the other hand every locally bounded $\left(
p\left( .\right) ,\vartheta \right) $-superharmonic function is a $\left(
p\left( .\right) ,\vartheta \right) $-supersolution. The proof can be easily
seen by using the similar method in \cite{HarMar}, \cite{Hei}.

Let $S\left( 
\mathbb{R}
^{d}\right) $ be the class of all $\left( p\left( .\right) ,\vartheta
\right) $-superharmonic functions in $%
\mathbb{R}
^{d}$. Since $\left( p\left( .\right) ,\vartheta \right) $-superharmonic
functions are lower semicontinuous and since $S\left( 
\mathbb{R}
^{d}\right) $ is closed under truncations, $\left( p\left( .\right)
,\vartheta \right) $-fine topology is the coarsest topology on $%
\mathbb{R}
^{d}$ making all locally bounded $\left( p\left( .\right) ,\vartheta \right) 
$-superharmonic functions continuous, see \cite{Hei}.

\section{Acknowledgment}

We express our thanks to Professor Jana Bj\"{o}rn for kind comments and
helpful suggestions.

\bigskip

\bigskip

\end{document}